\documentclass{siamart251216}
\usepackage[utf8]{inputenc}
\usepackage{csquotes}
\usepackage{amsfonts}
\usepackage{amsmath,amssymb,accents,empheq} 
\usepackage{enumitem}
\usepackage{subcaption}
\usepackage{graphicx}
\usepackage{bm}
\usepackage{float}
\usepackage{xcolor}
\usepackage{hyperref}
\usepackage{cite}

\usepackage{cleveref}
\usepackage{ulem}

\usepackage{tikz}

\usepackage{algorithm}
\usepackage{algpseudocode}
\numberwithin{equation}{section}

\hypersetup{
    colorlinks=true,
    linkcolor=green,
    citecolor=green,
    filecolor=green,      
    urlcolor=green,
}

\usepackage{listings}
 
\definecolor{codegreen}{rgb}{0,0.6,0}
\definecolor{codegray}{rgb}{0.5,0.5,0.5}
\definecolor{codepurple}{rgb}{0.58,0,0.82}
\definecolor{backcolour}{rgb}{0.95,0.95,0.92}

\title{A Structure-preserving Adaptive-Rank Approach to the High-Dimensional Wigner-Poisson System}
\author{Andrew J. Christlieb 
\thanks{Department of Computational Mathematics, Science and Engineering, Michigan State University, East Lansing, MI, 48824}
\and 
Sining Gong \thanks{Corresponding author. Department of Computational Mathematics, Science and Engineering, Michigan State University, East Lansing, MI, 48824 (\email{gongsini@msu.edu})}
\and 
Jing-Mei Qiu \thanks{Department of Mathematical Sciences, University of Delaware, Newark, DE, 19716} \and 
Nanyi Zheng \footnotemark[3]}

\headers{Adaptive-Rank Approach for Wigner-Poisson}{A. Christlieb, S. Gong, J. Qiu, and N. Zheng}

\begin{document}

\maketitle
\begin{abstract}
The Wigner–Poisson system is a deterministic phase-space model for quantum kinetic electron dynamics, but high-dimensional simulations are limited by the full 3D3V phase space and the nonlocal Wigner potential. We develop a structure-preserving, sampling-based adaptive-rank solver in hierarchical Tucker format for finite-$H$ regimes in which Wigner–Poisson solutions exhibit exploitable low-rank structure. The central difficulty is that adaptive compression can destroy the Fourier-Hermitian tensor symmetry required for a real inverse velocity transform and can break discrete global conservation laws. We address these issues with a Fourier-Hermitian-symmetry-aware sampling and mapping procedure and a global moment correction enforcing mass, momentum, and self-consistent total energy. Numerical tests for two-stream instability and strong Landau damping in 2D2V and 3D3V show roundoff-level conservation, preservation of the real-valued inverse transform, and approximately linear scaling with respect to the number of grid points per coordinate over the tested rank range. The results demonstrate that long-time 3D3V Wigner–Poisson simulations can be performed without assembling the full phase-space tensor.

\end{abstract}

{ \footnotesize{\textbf{Keywords}: Low-rank methods, Wigner-Poisson system, adaptive-rank solver, Strang-splitting methods, Structure-preserving, Conservation laws.} }

\begin{MSCcodes}
65M99, 65F55, 65J15, 81S30
\end{MSCcodes}

\section{Introduction}
The high-dimensional Wigner–Poisson (WP) system accounts for quantum wave-packet spreading, diffraction, and uncertainty effects, making it a natural model for electron dynamics in warm dense matter (WDM). In the WDM regimes relevant to $\alpha$-particle stopping-power studies at the National Ignition Facility (NIF)\cite{wigner1932quantum,bonitz2016quantum,graziani2014kinetic}, electrons are hot enough that Pauli exclusion effects may be neglected but dense enough that quantum effects remain important; their collective mean-field behavior is therefore well described by the WP system \cite{zylstra2022burning}.
However, simulating such a system presents a severe computational challenge due to its non-local structure, particularly in higher dimensions. Traditional methods for parallel discretization quickly become intractable due to prohibitive communication costs at each time step. 
Our previous work \cite{christlieb2025sampling} introduced a sampling-based adaptive-rank solver for the 1D1V WP system. In this work, we extend that approach to high-dimensional phase-space settings and incorporate a correction that enforces global conservation of mass, momentum, and energy.
To the best of our knowledge, this work achieves the first computationally tractable, long-time simulations of the WP system in 3D3V.

Deterministic mesh-based methods for the Wigner and Wigner–Poisson equations have been studied extensively since the mid-1980s. Early work developed finite-difference and integral-sum discretizations for quantum transport and resonant-tunneling devices, including the foundational schemes of Frensley \cite{frensley1987wigner,frensley1988quantum}, the central-difference and self-consistent WP formulations of Jensen and Buot \cite{jensen1989numerical,buot1990lattice,jensen1991methodology}, the difference-spectral methods of Ringhofer \cite{ringhofer1990spectral,ringhofer1992spectral}, and the reformulation of Mains and Haddad \cite{mains1994accurate}. Related analytical and numerical foundations for semi-discrete and discrete-velocity Wigner equations were developed in \cite{goudon2002analysis,arnold2000discrete}. More recent deterministic methods include adaptive conservative cell-average spectral element discretizations \cite{shao2011adaptive}, improved boundary treatments \cite{jiang2011accuracy}, momentum-domain narrowing \cite{kim2016efficient}, schemes for unbounded potentials \cite{chen2019numerical}, hybrid discretizations \cite{jiang2022hybrid,sun2024hybrid}, and explicit spectral treatments of the Wigner term subject to CFL restrictions \cite{xiong2016advective}.

Because of the nonlocal pseudo-differential structure of the Wigner operator, splitting methods are often used to isolate transport and potential-update subproblems \cite{suh1991numerical,arnold1995operator,ArnoldRinghofer1996,furtmaier2016semi,dorda2015weno,chen2019high}. Semi-Lagrangian and analytical spectral updates can reduce or remove transport CFL restrictions, while stability analyses and high-order spectral-WENO variants provide accurate deterministic solvers for the split subproblems \cite{suh1991numerical,arnold1995operator,ArnoldRinghofer1996,dorda2015weno,chen2019high,chen2022higher}. In parallel with these deterministic approaches, stochastic and Monte Carlo Wigner methods, including signed-particle simulations, branching random walks, and tunneling frameworks, have also been developed \cite{shifren2001particle,nedjalkov2004unified,sellier2014benchmark,shao2015comparison,shao2016computable,muscato2016class}. Broader reviews of deterministic and stochastic Wigner-function methods can be found in \cite{sellier2015introduction,weinbub2018recent}.

Beyond collisionless WP models, Wigner–Poisson–BGK equations incorporate relaxation effects for energy dissipation, charge continuity, and diffusion limits. Their asymptotic behavior, drift-diffusion limits, well-posedness, and Cauchy theory have been analyzed in \cite{arnold1996self,manzini2006rigorous,manzini2010diffusive,li2016cauchy}. Such collisional models have been used to simulate self-sustained current oscillations in semiconductor superlattices \cite{bonilla2005wigner,alvaro2010two} and have motivated asymptotic-preserving schemes and relaxation-time approximations \cite{crouseilles2014asymptotic,fernandes2015relaxation}. Although collisions are outside the scope of the present work, the adaptive-rank and conservation framework developed here is intended to be extensible to such settings.

Several classes of methods have been developed to mitigate the curse of dimensionality in high-dimensional PDEs, including sparse grids \cite{BungartzGriebel2004}, separated representations \cite{BeylkinMohlenkamp2005}, tensor-train and hierarchical tensor formats \cite{Oseledets2011,Hackbusch2012}, and modern low-rank tensor PDE solvers and surveys \cite{Khoromskij2012,Bachmayr2023}. These methods motivate the use of low-rank structure, but WP introduces additional constraints because the nonlocal Wigner operator is most efficient in Fourier variables and the inverse transform must remain real-valued. Low-rank truncation can destroy conservation properties even when the underlying full-grid discretization is conservative. This issue has been addressed in dynamical low-rank Vlasov solvers, including projector-splitting and quasi-conservative methods \cite{EinkemmerLubich2018,EinkemmerLubich2019}, mass/momentum/energy-conservative dynamical low-rank schemes \cite{Einkemmer2021}, conservative low-rank tensor methods \cite{GuoQiu2024Conservative}, and local macroscopic conservative low-rank tensor methods \cite{GuoQiu2024LoMaC}. The present correction is closer in spirit to moment-matching and conservative projection, but it is adapted to the self-consistent WP total energy and the hierarchical Tucker adaptive cross
approximation (HTACA) sampling framework.

In this paper, we extend the sampling-based adaptive-rank solver developed in \cite{christlieb2025sampling} from the 1D1V WP system to high-dimensional phase-space settings by incorporating HTACA. To the best of our knowledge, this is the first structure-preserving adaptive-rank solver for high-dimensional WP systems. The
main contributions are threefold. First, we design an adaptive-rank high-dimensional WP solver that avoids assembling the full phase-space tensor and exhibits nearly linear scaling with respect to the number of grid points per coordinate under the tested truncation tolerance for an arbitrary CFL condition. Second, we introduce a symmetry-aware sampling and mapping strategy in velocity Fourier space, ensuring that the inverse Fourier transform remains real-valued up to roundoff. Third, inspired by the LoMaC framework in \cite{GuoQiu2024LoMaC} and using adaptive weight functions constructed from leading singular vectors in velocity space \cite{ZSQ2026}, we develop a global moment-correction procedure for the adaptive-rank solution. The correction enforces discrete global conservation of mass, all $d$
momentum components, and self-consistent total energy through a rank-$(d+2)$
update, where $d$ is the physical dimension of the WP system. Numerical tests in
2D2V and 3D3V demonstrate that the proposed method captures high-dimensional Wigner dynamics while preserving the desired symmetry and conservation properties, and exhibiting approximately linear runtime growth in the tested benchmark examples for the ranks induced by the prescribed truncation tolerances.

The remainder of the paper is organized as follows. Section~\ref{sec:WP}
introduces the nondimensional WP system, its moment equations, and the
improved full-rank Strang-splitting solver used as the reference discretization.
Section~\ref{sec:highD_WP_solver} presents the adaptive-rank solver, including
the HTACA representation, the Hermitian-symmetry-preserving Fourier update, and
the global conservation correction. Section~\ref{sec:numerical} reports
high-dimensional numerical tests for two-stream instability and strong Landau
damping, including conservation, symmetry, and scaling results. Finally,
Section~\ref{sec:conclusion} summarizes the findings and discusses future
extensions.

\section{Wigner--Poisson System}\label{sec:WP}
In this section, we introduce the high-dimensional WP system with its nondimensional formulation, the continuous quantum fluid model leading to the conservation laws, and the improved full-rank solver with structure-preserving properties that the adaptive-rank algorithm will emulate. 

\subsection{Model Setup}\label{subsec:Wigner poisson system}

The WP system in high dimensions, as presented in~\cite{arnold1995operator,ArnoldRinghofer1996,markowich2012semiconductor}, takes the form:
\begin{subequations}\label{eqn:WPmodel}
\begin{align}
\frac{\partial \tilde{f}}{\partial \tilde{t}}
+ \tilde{v}\cdot \nabla_s \tilde{f}
&=
\begin{aligned}[t]
&\frac{-i e m_e^d}{(2\pi)^d \hbar^{d+1}}
\int_{\mathbb{R}^d_{\tilde{v}'}}\int_{\mathbb{R}^d_{s'}}
d\tilde{v}'\, ds'\,
\exp \left(\frac{i m_e (\tilde{v}'-\tilde{v})\cdot s'}{\hbar}  \right)\\
&\qquad \times
\left[
\phi\left(s+\frac{s'}{2}\right)-\phi\left(s-\frac{s'}{2}\right)
\right]
\tilde{f}(s,\tilde{v}',\tilde{t}),
\end{aligned}
\label{eqn:WPmodel_wigner}
\\
\Delta_s \phi
&=
-\frac{e}{\epsilon_0}
\left(
\int_{\mathbb{R}^d_{\tilde v}} d\tilde{v}\,
\tilde{f}
-
n_0
\right).
\label{eqn:WPmodel_poisson}
\end{align}
\end{subequations}
where $s, \tilde{v} \in \mathbb{R}^d \, (d = 1,2, \text{or } 3)$, $\tilde{f}(s, \tilde{v}, \tilde{t})$ is the Wigner distribution function in phase space, and $\phi(s)$ denotes the electrostatic potential. The physical constants are $m_e$, the particle mass; $e$, the elementary charge; $\epsilon_0$, the vacuum permittivity; $n_0$, the background number density; $\hbar$, the reduced Planck constant; and $\tilde{t}$, the physical time. 
In the Wigner equation, the potential acts on the distribution function via the right-hand side, which constitutes a highly non-local pseudo-differential operator. Unlike the classical Vlasov–Poisson (VP) model, where the potential acts through a simple local derivative, the quantum potential depends on a spatial displacement $s'$ and involves a highly oscillatory phase integral over the entire velocity space. Notably, under the correct scaling, it can be shown that the WP model asymptotically recovers the local classical VP formulation. For a rigorous proof of this reduction in the 1D1V nondimensional case, we refer the reader to the appendix of \cite{christlieb2025sampling}; the extension to high-dimensional cases follows as a straightforward generalization. This limit is used only for interpretation; the numerical method is designed for finite $H$ introduced later in \eqref{eqn:nonD-WP-system}.

We now nondimensionalize the WP system to facilitate scaling analysis and numerical discretization. Let $\tau$, $l$, and $\bar{\phi}$ denote the characteristic time, length, and electrostatic potential scales, respectively. The corresponding nondimensional variables are defined as:
\[
\tilde{t} = \tau t,\quad s = l \bm{x},\quad \phi = \bar{\phi} \Phi,\quad \tilde{v} = \frac{l}{\tau} {\bm{v}},\quad f = \frac{l^d}{n_0 \tau} \tilde{f}; \quad  ds = l^d d\bm{x}, \, d\tilde{v} = \frac{l^d}{\tau^d} \, d{\bm{v}}.
\]
Substituting these into the WP system \eqref{eqn:WPmodel} yields:
\begin{subequations}
\begin{align}
\frac{\partial f}{\partial t} + {\bm{v}} \cdot \nabla_{\bm{x}} f &= -\frac{iC}{(2\pi)^d H^{d+1}} \int_{\mathbb{R}^d_{{\bm{v}}'}}\int_{\mathbb{R}^d_{\bm{x}'}} d{\bm{v}}' d{\bm{x}}' \, \exp\left(i \frac{{\bm{v}}' - {\bm{v}}}{H} \cdot {\bm{x}}'\right) \\
& \qquad \times \left[\Phi\left({\bm{x}} + \frac{{\bm{x}}'}{2}\right) - \Phi\left({\bm{x}} - \frac{{\bm{x}}'}{2}\right)\right] f({\bm{x}}, {\bm{v}}', t), \nonumber \\
\Delta_{\bm{x}} \Phi &= -D \left(\int_{\mathbb{R}^d_{{\bm{v}}}} f \, d{\bm{v}} - \rho_0 \right), \qquad \text{with } \rho_0 = 1,
\end{align}
\end{subequations}
where $\bm{x},\bm{v} \in \mathbb{R}^d$. 
The resulting dimensionless parameters are given by:
$$C = \frac{e \bar{\phi} \tau^2 }{m_e l^{2}},\quad H = \frac{\tau \hbar}{m_e l^2},\quad D = \frac{e n_0 l^2}{\bar{\phi} \epsilon_0}.$$
To simplify the system and highlight key physical scalings, we choose the potential scale as $\bar{\phi} = \frac{e n_0 l^2}{\epsilon_0}$. We define the time and length scales based on characteristic plasma parameters: time is scaled by the plasma frequency $\omega_{pe}$, and space by the Debye length $\lambda_D$, given respectively by
$$\omega_{pe} = \sqrt{ \frac{e^2 n_0}{m_e \epsilon_0} }, \qquad \lambda_D = \sqrt{ \frac{ \epsilon_0 k_B T_0}{n_0 e^2} },$$
where $T_0$ denotes the reference temperature. Under these scalings, the velocity scale naturally becomes the thermal velocity, a standard metric in kinetic plasma descriptions:
$$\frac{l}{\tau} = \lambda_D \omega_{pe} = \sqrt{ \frac{ \epsilon_0 k_B T_0}{n_0 e^2} } \cdot \sqrt{ \frac{e^2 n_0}{m_e \epsilon_0} } = \sqrt{ \frac{k_B T_0}{m_e} } = v_{th}.$$
As a result of these choices, $C$ and $D$ both reduce to unity, leaving $H$ as the sole remaining parameter quantifying the strength of quantum effects in the dimensionless formulation:
\begin{equation}\label{eqn:quantum parameter}
C = \frac{e \bar{\phi} }{m_e l^2 \omega_{pe}^2} 
= \frac{e}{m_e l^2} \cdot \frac{e n_0 l^2}{\epsilon_0} \cdot \frac{m_e \epsilon_0}{e^2 n_0} = 1, \quad
H = \frac{ \hbar}{m_e \lambda_D^2 \omega_{pe}}, \quad
D = \frac{e n_0 l^2}{\bar{\phi} \epsilon_0} = 1.
\end{equation}
Thus, the finalized nondimensional WP system takes the form:
\begin{subequations}\label{eqn:nonD-WP-system}
\begin{align}
\label{eq:wigner}
\frac{\partial f}{\partial t} + \bm{v}  \cdot \nabla_{\bm{x}}  f &= \frac{-i}{(2 \pi)^d H^{d+1}} \int_{\mathbb{R}^d_{\bm{v} '}}\int_{\mathbb{R}^d_{\bm{x} '}}
d\bm{v} ' d\bm{x} ' \, e^{\left( i \frac{\bm{v} ' - \bm{v} }{H} \cdot \bm{x} ' \right) }\\
& \qquad \times \left[ \Phi\left(\bm{x}  + \frac{\bm{x} '}{2}\right) - \Phi\left(\bm{x}  - \frac{\bm{x} '}{2}\right)\right] f(\bm{x} ,\bm{v} ', t),  \nonumber \\
- \Delta_{\bm{x}}  \Phi &= \int_{\mathbb{R}^d_{\bm{v} }} f \, d\bm{v}  - 1, \label{eq:poisson}
\end{align}
\end{subequations}
An effective adaptive-rank WP solver should preserve the intrinsic physical invariants of the continuous system. We therefore derive the moment equations that define the target invariants for the later correction step in Section \ref{subsec:global_conservation}. 
The WP system possesses several physical invariants. To facilitate their definitions, let us define the integrals over the spatial domain $\mathbb{R}^d_{\bm{x}}$ and the velocity domain $\mathbb{R}^d_{\bm{v}}$:
$$ \Big\langle \cdot \Big\rangle_{\mathbb{R}^d_{\bm{x}}} := \int_{\mathbb{R}^d_{\bm{x}}} (\cdot) \,d\bm{x}, \quad \Big\langle \cdot \Big\rangle_{\mathbb{R}^d_{\bm{v}}} := \int_{\mathbb{R}^d_{\bm{v}}} (\cdot) \,d\bm{v}, \quad \Big\langle \cdot \Big\rangle_{\mathbb{R}^d} := \int_{\mathbb{R}^d_{\bm{x}}}\int_{\mathbb{R}^d_{\bm{v}}} (\cdot) \,d\bm{v}\,d\bm{x}. $$
The macroscopic moments, namely density $\rho(\bm{x},t)$, current density $\bm{J}(\bm{x},t)$, pressure $P(\bm{x},t)$, and heat flux $\bm{Q}(\bm{x},t)$, are then defined as:
$$ \rho(\bm{x},t) = \Big\langle f \Big\rangle_{\mathbb{R}^d_{\bm{v}}}, \, \bm{J}(\bm{x},t) = \Big\langle \bm{v} f \Big\rangle_{\mathbb{R}^d_{\bm{v}}}, \,  P(\bm{x},t) = \Big\langle |\bm{v}|^2 f \Big\rangle_{\mathbb{R}^d_{\bm{v}}}, \, \bm{Q}(\bm{x},t) = \Big\langle |\bm{v}|^2 \bm{v} f \Big\rangle_{\mathbb{R}^d_{\bm{v}}} $$
On periodic domains, the WP system preserves the total mass $\mathcal{M}[f] = \langle \rho \rangle_{\mathbb{R}^d_{\bm{x}}}$, total momentum $\mathcal{J}[f] = \langle \bm{J}\rangle_{\mathbb{R}^d_{\bm{x}}}$, and total energy $\mathcal{E}[f] = \frac{1}{2}\langle P\rangle_{\mathbb{R}^d_{\bm{x}}} + \frac{1}{2}\langle |\nabla \Phi|^2 \rangle_{\mathbb{R}^d_{\bm{x}}}$, where $\nabla \Phi$ is the electric field computed at the target time level.
\begin{theorem}
    Assume periodic boundary conditions on the spatial domain $\Omega_x$. Then the WP system satisfies the following quantum-fluid moment equations.
        \begin{subequations}
        \begin{align}
            \frac{\partial \rho}{\partial t} + \nabla \cdot \bm{J} & = 0 \label{eqn:quantum_fluid_model_a}\\
            \frac{\partial \bm{J}_i }{\partial t} + \sum_{j=1}^d \frac{\partial}{\partial x_j} \left( \int_{\mathbb{R}^d} v_i v_j f \, dv \right) & = -\rho (\frac{\partial}{\partial x_i}\Phi), i = 1,\cdots,d \label{eqn:quantum_fluid_model_b}\\
           \frac{\partial P}{\partial t} + \nabla \cdot \bm{Q} &= -2 \boldsymbol{J} \cdot \nabla \Phi.\label{eqn:quantum_fluid_model_c}
        \end{align}\label{eqn:quantum_fluid_model}
    \end{subequations}
Furthermore, the conservation laws of mass, momentum, and energy 
can be derived through these equations. 
\end{theorem}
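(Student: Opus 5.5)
The plan is to take velocity moments of the Wigner equation \eqref{eq:wigner} against the weights $1$, $v_i$, and $|\bm{v}|^2$. The transport term then produces the flux divergences directly. The whole difficulty is computing the moments of the nonlocal right-hand side, which we denote $\Theta[\Phi]f$.

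The first step is to rewrite $\Theta[\Phi]f$ in Fourier form. Integrating against a weight $w(\bm{v})$ and exchanging the order of integration, we get
\[
\int w(\bm{v})\,\Theta[\Phi]f\,d\bm{v}
= \frac{-i}{(2\pi)^d H^{d+1}}\int d\bm{v}'\,f(\bm{x},\bm{v}')\int d\bm{x}'\,e^{i\bm{v}'\cdot\bm{x}'/H}\,\delta\Phi(\bm{x},\bm{x}')\int w(\bm{v})\,e^{-i\bm{v}\cdot\bm{x}'/H}\,d\bm{v},
\]
where $\delta\Phi(\bm{x},\bm{x}')=\Phi(\bm{x}+\bm{x}'/2)-\Phi(\bm{x}-\bm{x}'/2)$. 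For a polynomial weight, the inner integral is a distribution:
\begin{itemize}
\item for $w=1$ it equals $(2\pi H)^d\delta(\bm{x}')$;
\item for $w=v_i$ it equals $(2\pi H)^d\, iH\,\partial_{x'_i}\delta(\bm{x}')$;
\item for $w=|\bm{v}|^2$ it equals $-(2\pi H)^d H^2\Delta_{\bm{x}'}\delta(\bm{x}')$.
\end{itemize}
The $x'$-integral then evaluates derivatives of $e^{i\bm{v}'\cdot\bm{x}'/H}\delta\Phi$ at $\bm{x}'=0$.

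Next we evaluate each weight in turn, using that $\delta\Phi$ is odd in $\bm{x}'$. For $w=1$, $\delta\Phi(\bm{x},0)=0$, so the moment vanishes and \eqref{eqn:quantum_fluid_model_a} follows. For $w=v_i$, integration by parts gives $-\partial_{x'_i}\bigl[e^{i\bm{v}'\cdot\bm{x}'/H}\delta\Phi\bigr]_{\bm{x}'=0}=-\partial_{x_i}\Phi(\bm{x})$. Tracking the constants $\frac{-i}{H}\cdot iH$ yields the source $-\partial_{x_i}\Phi\int f\,d\bm{v}'=-\rho\,\partial_{x_i}\Phi$, which is \eqref{eqn:quantum_fluid_model_b}.

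For $w=|\bm{v}|^2$ we need $\Delta_{\bm{x}'}$ of the product at $0$. The term with no derivative on $\delta\Phi$ vanishes because $\delta\Phi(\bm{x},0)=0$. The term $\Delta_{\bm{x}'}\delta\Phi$ vanishes at $0$ because it is odd. Only the cross term $2\,\nabla_{\bm{x}'}e^{i\bm{v}'\cdot\bm{x}'/H}\cdot\nabla_{\bm{x}'}\delta\Phi=2\,(i\bm{v}'/H)\cdot\nabla\Phi(\bm{x})$ survives. Integrating against $f(\bm{x},\bm{v}')$ gives $-2\bm{J}\cdot\nabla\Phi$ after the constants cancel, which is \eqref{eqn:quantum_fluid_model_c}.

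The main obstacle is rigor and bookkeeping rather than structure. The exchange of integrals needs sufficient decay of $f$ in $\bm{v}$ so that the moments $\int|\bm{v}|^3|f|$ are finite, together with smoothness of $\Phi$. The signs and powers of $i$ and $H$ must also be tracked carefully. I would first redo everything with the equivalent Fourier-in-$\bm{v}$ form of $\Theta$, in which the moments become derivatives at $\eta=0$ of the transformed equation.

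Finally, the conservation laws follow by integrating each equation over the periodic domain $\Omega_x$, where all divergence terms drop. Mass is immediate from \eqref{eqn:quantum_fluid_model_a}.

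For momentum, we use \eqref{eq:poisson} to write $\rho=1-\Delta\Phi$. The term $\int\partial_i\Phi$ vanishes by periodicity, and $\int\Delta\Phi\,\partial_i\Phi=\int\partial_j(\partial_j\Phi\,\partial_i\Phi)-\tfrac12\partial_i|\nabla\Phi|^2=0$.

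For energy, \eqref{eqn:quantum_fluid_model_c} gives $\frac{d}{dt}\tfrac12\int P=-\int\bm{J}\cdot\nabla\Phi=\int\Phi\,\nabla\cdot\bm{J}=-\int\Phi\,\partial_t\rho$. Since $\partial_t\rho=-\Delta\partial_t\Phi$, this equals $\int\Phi\,\Delta\partial_t\Phi=-\frac{d}{dt}\tfrac12\int|\nabla\Phi|^2$. Hence $\mathcal{E}[f]$ is conserved.
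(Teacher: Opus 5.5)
Your proposal is correct and follows the same route as the paper's proof: take the $1$, $v_i$, $|\bm{v}|^2$ velocity moments of \eqref{eq:wigner}, then integrate over the periodic spatial domain. It is more complete than the paper's sketch, because you evaluate the nonlocal moments explicitly. You also invoke the Poisson equation \eqref{eq:poisson} to remove the non-divergence source terms $-\rho\,\partial_{x_i}\Phi$ and $-2\bm{J}\cdot\nabla\Phi$, a step the paper's remark that periodicity ``cancels divergence terms'' leaves implicit.
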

\begin{proof}
    Multiplying \eqref{eq:wigner} by $1, \bm{v}, |\bm{v}|^2$, and integrating over velocity $\mathbb{R}_v^d$ gives \eqref{eqn:quantum_fluid_model_a} -- \eqref{eqn:quantum_fluid_model_c}. Integrating over the periodic spatial domain $\Omega_x$ cancels divergence terms and we have 
    \[
    \partial_t \mathcal{M}[f](t) = 0, \quad \partial_t \mathcal{J}[f](t) = 0, \quad \partial_t \mathcal{E}[f](t) = 0, 
    \]
    and thus mass, momentum and energy are conserved.
    \end{proof}

\subsection{A full-rank solver based on second-order Strang splitting}\label{subsec:SSmethod}
Building on the nondimensionalized form \cref{eqn:nonD-WP-system}, we introduce the improved full-rank solver based on the Strang splitting \cite{arnold1995operator}. The high-dimensional extension of the full-rank solver is a direct generalization of the 1D1V case \cite{christlieb2025sampling}. For completeness, we summarize the core ideas in this section. In the following sections, unless otherwise specified, 
the periodic spatial domain $\Omega_{\bm{x}}$ is discretized along each dimension with a uniform Cartesian mesh,
\begin{equation}\label{eqn:spatial_dom_dis}
    a^{(\mu)} = x^{(\mu)}_{0} < x^{(\mu)}_{1} < \ldots < x^{(\mu)}_{N^{\mu}_{\bm{x}}} = b^{(\mu)} - \Delta x^{(\mu)}, 
    \quad \mu=1,2,\ldots,d,
\end{equation}
with $\Delta x^{(\mu)} = x^{(\mu)}_{1} - x^{(\mu)}_{0}$ and $L_x^{(\mu)} = b^{(\mu)}  - a^{(\mu)} $. Similarly, we define a uniform Cartesian mesh on a symmetric velocity domain $\Omega_{\bm{v}}$,
\begin{equation}\label{eqn:velocity_dom_dis}
    -L_v^{(\mu)} = v^{(\mu)}_{0} < v^{(\mu)}_{1} < \ldots < v^{(\mu)}_{N^{\mu}_{\bm{v}}} = L_v^{(\mu)} - \Delta v^{(\mu)}, 
    \quad \mu=1,2,\ldots,d,
\end{equation}
where $\Delta v^{(\mu)} = v^{(\mu)}_{1} - v^{(\mu)}_{0}$, and $L_v^{(\mu)}$ serves as the cutoff bound of the original infinite velocity domain. Let $f^{l}_{i_1,\ldots,i_d,j_1,\ldots,j_d}$ denote the numerical solution at $(\bm{x}_{i_1,\ldots,i_d},\bm{v}_{j_1,\ldots,j_d})$, where $\bm{x}_{i_1,\ldots,i_d} = (x_{i_1}^{(1)},\ldots,x_{i_d}^{(d)})$, $\bm{v}_{j_1,\ldots,j_d} = (v_{j_1}^{(1)},\ldots,v_{j_d}^{(d)})$, the index $l \in \{(*,n+1/2), (*,n+1),n,n+1\}$ represents different time levels and {$\Phi^{n+1/2}_{i_1,\ldots,i_d}$ represents the potential approximation at $(\bm{x}_{i_1,\ldots,i_d},(n+1/2)\Delta t)$.}

The foundation of our approach is a second-order Strang splitting solver. Given a solution $f({\bm{x}},{\bm{v}},n\Delta t)$, the method advances the system to the next time step $(n+1)\Delta t$ through the following sequence:

{\bf a. First half-step advection:} The system is advanced by $\frac{\Delta t}{2}$ in physical space, solving the advection subproblem:
\begin{align}\label{eq:split_adv1} \frac{\partial f}{\partial t} + {\bm{v}} \cdot \nabla_{\bm{x}} f &= 0. \end{align}
This yields an intermediate solution, $f^{*,n+1/2}$. The method used here is the conservative semi-Lagrangian (SL) method with high-order WENO reconstruction, which is the same as in the 1D1V WP solver \cite{christlieb2025sampling}. The SL framework solves the advection equation by tracing characteristics backward in time, because the solution is constant along the characteristic curves. For a fixed velocity $\bm{v}_j$, the solution at the intermediate step $t^{n+1/2}$ is found by tracing back to the departure point $\bm{x}_\star = (x^{(1)}_{\star},\ldots,x^{(d)}_{\star})$, where, in particular, $x_{\star}^{(\mu)} = \mod\left(x_{i_{\mu}^{(\mu)}} - v_{j_{\mu}^{(\mu)}}  (\frac{\Delta t}{2}), L_x^{(\mu)}\right)$, such that \begin{equation}
    f_{j_1,\ldots,j_d}^{*,n+1/2}(\bm{x}_{i_1,\ldots,i_d}) = f_{j_1,\ldots,j_d}(\bm{x}_\star, t^n).
    \label{eq:SL_basic_formulation}
\end{equation}
To evaluate $f(\bm{x}_\star, t^n)$, various interpolation or reconstruction strategies can be employed. We use fifth-order conservative WENO reconstruction along each axis; details are given in \cite{christlieb2025sampling,zheng2025semi,qiu2010conservative}; the fifth-order spatial accuracy significantly mitigates numerical diffusion with relatively coarse computational meshes \cite{gucclu2014arbitrarily}. Implementation details are given in Appendix \ref{apdix:WENO}.

{\bf b. Poisson solver:} The density $\int_{\mathbb{R}^d_{{\bm{v}}}} d{\bm{v}}\, f^{*,n+1/2}$ is computed, and the electrostatic potential is updated by solving the following equation using the Fast Fourier Transform (FFT) due to the periodic boundary conditions.
\begin{align}\label{eq:split_poisson} 
-\Delta_{\bm{x}} \Phi = \int_{\mathbb{R}^d_{{\bm{v}}}} d{\bm{v}}\, f^{*,n+1/2} - 1 := \bm{\rho}^{*,n+1/2} - 1. 
\end{align}
For the right-hand side of the Poisson equation, the integration over velocity is computed using the trapezoidal rule on a symmetric, finite velocity domain. Since $f(\bm{x},\bm{v})$ decays exponentially as $|\bm{v}| \to \infty$, the quadrature error is primarily determined by the domain truncation~\cite[Chap.~17]{boyd2001chebyshev}. In implementation, both integrals reduce to simple discrete summations multiplied by $\Delta v$ and $\Delta x$, respectively.

Let $\widehat{\Phi}$ denote the FFT of $\Phi$, and let $k_{x^{(1)},i_1}, k_{x^{(2)},i_2}, \cdots, k_{x^{(d)},i_d}$ be the discrete wavenumbers associated with the domain lengths and grid resolution. The FFT diagonalizes the Poisson equation~\cref{eq:poisson}, yielding for each nonzero mode $(i_1,i_2,\cdots,i_d)$:
\begin{equation}
    \label{eq:FFT Poisson solution}
    \widehat{\Phi}_{i_1,i_2,\cdots,i_d} = \frac{1}{\sum_{\mu = 1}^d k_{x^{(\mu)},i_{\mu}}^2} \, \widehat{\rho}_{i_1,i_2,\cdots,i_d}, 
    \quad (i_1,i_2,\cdots,i_d) \neq (0,0,\cdots,0).
\end{equation}
We enforce $\widehat{\Phi}_{0,0,\cdots,0} = 0$ to ensure a zero-mean solution and apply an inverse FFT back to $\Phi^{n+1/2}$ needed for the Wigner term. 

{\bf c. Full-step Wigner operator:} Under the frozen-field approximation, in which $\Phi$ is held fixed over the time interval $\Delta t$, a full $\Delta t$ step is taken on the Wigner operator using $f^{*,n+1/2}$ as the initial condition:
\begin{align}\label{eq:split_dbintegral} 
\frac{\partial f}{\partial t} = -\frac{i}{(2 \pi)^d H^{d+1}} \int_{\mathbb{R}^d_{{\bm{v}}'}}\int_{\mathbb{R}^d_{{\bm{x}}'}} d{\bm{v}}' d{\bm{x}}' \, & \exp\left( i \frac{{\bm{v}}' - {\bm{v}}}{H} \cdot {\bm{x}}' \right) \nonumber \\
& \times \left[ \Phi\left({\bm{x}} + \frac{{\bm{x}}'}{2}\right) - \Phi\left({\bm{x}} - \frac{{\bm{x}}'}{2}\right)\right] f({\bm{x}},{\bm{v}}', t). 
\end{align}
This produces the second intermediate state, $f^{*,n+1}$, as detailed in Section \ref{subsec:FFT and Wigner}.

{\bf d. Second half-step advection:} Finally, the solution $f^{*,n+1}$ is advanced by another half-step $\frac{\Delta t}{2}$ using the advection subproblem \eqref{eq:split_adv1} to obtain the final numerical solution, $f^{n+1}$. Optional correction steps can be applied afterward to enforce conservation laws.

\subsection{Fourier update for the Wigner operator}\label{subsec:FFT and Wigner}
The Wigner update is efficient in velocity Fourier variables, but this formulation introduces a discrete Hermitian-symmetry constraint. Preserving that constraint is the first structure-preservation requirement of the WP solver.
In the subsequent subsections, we detail the Fourier approach with structure-preserving properties employed to efficiently handle the non-local Wigner potential operator.
Recall that the Fourier transform and its inverse are defined as:
$$\tilde{f}(\omega) := \mathcal{F}\{f(t)\} = \int_{\mathbb{R}^d} f(t)\, e^{-i \omega \cdot t} \, dt, \quad \mathcal{F}^{-1}\{\tilde{f}(\omega)\} = \frac{1}{(2\pi)^d} \int_{\mathbb{R}^d} \tilde{f}(\omega)\, e^{i \omega \cdot t} \, d\omega.$$
Applying the Fourier transform with respect to $\bm{v}$ to the full-step Wigner equation \eqref{eq:split_dbintegral}, and letting $\bm{k}_{\bm{v}}$ be the corresponding Fourier conjugate variable, we focus on the integral term to obtain:
\begin{align*}
    \mathcal{F}_{\bm{v}}\left\{ \int \exp\left(i \frac{{\bm{v}}' - {\bm{v}}}{H} \cdot {\bm{x}}'\right) f({\bm{x}},{\bm{v}}',t)\, d{\bm{v}}' \right\} & = \mathcal{F}_{\bm{v}}\left\{ e^{-i \frac{{\bm{v}}}{H} \cdot {\bm{x}}'} \right\} \cdot \mathcal{F}_{\bm{v}}\{f({\bm{x}},{\bm{v}},t)\} \\
    & = (2\pi)^d\, \delta\left(\bm{k}_{\bm{v}} + \frac{{\bm{x}}'}{H}\right) \tilde{f}({\bm{x}},\bm{k}_{\bm{v}},t),
\end{align*}
where $\tilde{f}$ is the Fourier transform of the solution $f({\bm{x}},{\bm{v}},t)$ in velocity space, and $\delta$ denotes the Dirac delta function. Substituting this back into the Wigner subproblem yields:
$$\frac{\partial \tilde{f}}{\partial t} = -\frac{i}{H^{d+1}} \int_{\mathbb{R}^d} d{\bm{x}}'\, \delta\left(\bm{k}_{\bm{v}} + \frac{{\bm{x}}'}{H}\right) \left[ \Phi\left({\bm{x}} + \frac{{\bm{x}}'}{2}\right) - \Phi\left({\bm{x}} - \frac{{\bm{x}}'}{2}\right) \right] \tilde{f}({\bm{x}},\bm{k}_{\bm{v}},t).$$
We now introduce the change of variables $\tilde{{\bm{x}}} = \bm{k}_{\bm{v}} + \frac{{\bm{x}}'}{H}$, which implies $d{\bm{x}}' = H^d d\tilde{{\bm{x}}}$. Applying the standard identity $\int_{\mathbb{R}^d} \delta({\bm{x}}) f({\bm{x}})\, d{\bm{x}} = f(\bm{0})$ collapses the integral, resulting in:
\begin{equation}
\label{eq:ODE_kv} 
\begin{split} \frac{\partial \tilde{f}}{\partial t} &= -\frac{i}{H} \int_{\mathbb{R}^d} d\tilde{{\bm{x}}}\, \delta(\tilde{{\bm{x}}}) \left[ \Phi\left({\bm{x}} + \frac{H(\tilde{{\bm{x}}} - \bm{k}_{\bm{v}})}{2} \right) - \Phi\left({\bm{x}} - \frac{H(\tilde{{\bm{x}}} - \bm{k}_{\bm{v}})}{2} \right) \right] \tilde{f}({\bm{x}},\bm{k}_{\bm{v}},t) \\ &= \frac{i}{H} \left[ \Phi\left({\bm{x}} + \frac{H \bm{k}_{\bm{v}}}{2} \right) - \Phi\left({\bm{x}} - \frac{H \bm{k}_{\bm{v}}}{2} \right) \right] \tilde{f}({\bm{x}},\bm{k}_{\bm{v}},t). 
\end{split} 
\end{equation}
Equation \eqref{eq:ODE_kv} is now a straightforward linear ordinary differential equation (ODE) in time. By holding $\Phi$ fixed over the full time step $\Delta t$, the equation admits an exact analytical solution:
\begin{equation}\label{eq:FFT_updateform} 
\tilde{f}(x,\bm{k}_{\bm{v}},t_0+\Delta t) = \tilde{f}({\bm{x}},\bm{k}_{\bm{v}},t_0) e^{\left( \frac{i}{H} \left[ \Phi\left({\bm{x}} + \frac{H \bm{k}_{\bm{v}}}{2} \right) - \Phi\left({\bm{x}} - \frac{H \bm{k}_{\bm{v}}}{2} \right) \right] \Delta t \right)},
\end{equation}
where $\tilde{f}({\bm{x}},\bm{k}_{\bm{v}},t_0)$ is the Fourier-transformed intermediate solution $\tilde{f}^{*,n+1/2}$ with respect to velocity space. Here,  the potential $\Phi$ must be evaluated at spatially shifted locations that depend on the frequency mode. Because these shifted points do not, in general, perfectly align with the spatial grid, a clamped cubic Hermite spline is utilized to interpolate $\Phi$ at these off-grid locations. For a given target point, the algorithm identifies the surrounding grid cell and extracts a local $4^d$ tensor patch. It then uses the function values, first derivatives, and cross-derivatives at the cell corners, estimated using FFT-Poisson solver to smoothly blend the potential surface using Hermite basis polynomials.

Equation~\eqref{eq:FFT_updateform} reveals an important structural property: the update involves multiplying the Fourier transform of a real-valued function by a conjugate symmetric complex function,
\begin{equation}\label{eqn:expo_g}
    g^x(\bm{k}_{\bm{v}}) := \exp \left( \frac{i}{H} \left[ \Phi\left(\bm{x} + \frac{H \bm{k}_{\bm{v}}}{2} \right) - \Phi\left(\bm{x} - \frac{H \bm{k}_{\bm{v}}}{2} \right) \right] \Delta t \right),
\end{equation}
which satisfies
$g^x(-\bm{k}_{\bm{v}}) = \overline{g^x(\bm{k}_{\bm{v}})}, \quad \forall \bm{k}_{\bm{v}} \in \mathbb{R}^d \text{ or } \mathbb{Z}^d.$

By standard Fourier theory, the transform of any real-valued function is conjugate symmetric, and conversely, the inverse transform of a conjugate symmetric function is guaranteed to be real. As a result, the updated solution should remain real-valued in exact arithmetic. However, preserving this structure in numerical practice requires careful handling, which we summarize as follows:
\begin{itemize}
    \item The number of mesh points in the velocity domain, ${N^{\mu}_{\bm{v}}}$, is restricted to a power of two to maximize the efficiency of the FFT \cite{knuth2014art}.
    \item To map the continuous infinite domain to the discrete grid required by the equation, the FFT frequency domain along each velocity axis should be zero-centered and normalized by a factor of $\frac{2\pi}{2L^{(\mu)}_{\bm{v}}}$, as shown in the following form and separated into two parts.
\begin{subequations}\label{eq:frequency space}
\begin{align}
    & K_{\bm{v}}^{(\mu)}  = \frac{2\pi}{2L^{(\mu)}_{\bm{v}}} \left\{-\frac{N^{(\mu)}_{\bm{v}}}{2}, -\frac{N^{(\mu)}_{\bm{v}}}{2}+1, \dots, 0, \dots, \frac{N^{(\mu)}_{\bm{v}}}{2}-1 \right\}, \quad \mu=1,2,\ldots,d;\\
& (K_{\bm{v}}^{(\mu)})^+  = \frac{2\pi}{2L^{(\mu)}_{\bm{v}}} \left\{1, \dots, \frac{N^{(\mu)}_{\bm{v}}}{2}-1 \right\}, \\
& (K_{\bm{v}}^{(\mu)})^-  = \frac{2\pi}{2L^{(\mu)}_{\bm{v}}} \left\{-\frac{N^{(\mu)}_{\bm{v}}}{2}, -\frac{N^{(\mu)}_{\bm{v}}}{2}+1, \dots, -1\right\}.
\end{align}   
\end{subequations}
Let $k^{(\mu)}_{Nyq} = \frac{2\pi}{2L^{(\mu)}_{\bm{v}}} (-\frac{N^{(\mu)}_{\bm{v}}}{2})$, which is called the Nyquist frequency.  
\item Similar to the 1D1V case, evaluating the Nyquist frequency introduces numerical artifacts because the truncation of $g^x(\bm{k}_{\bm{v}})$ breaks the Hermitian symmetry pairing at these nodes. To resolve this, we enforce a zero value for $\tilde{f}({\bm{x}},\bm{k}_{\bm{v}},t_0)$ at the following positions:
\begin{align}\label{eqn:S_def}
    S & = \{ \bm{k}_{\bm{v}} = (k_{j_1}^{(1)},\ldots,k_{j_d}^{(d)}), k^{(\mu)}_{j_{\mu}} \in \{0, k^{(\mu)}_{Nyq}\}, \mu \in \{1,\ldots,d\} | \exists \mu \in \{1,\ldots,d\}, \\
    & \qquad \quad  \text{such that } k^{(\mu)}_{j_{\mu}} = k^{(\mu)}_{Nyq}\}.  
\end{align}
As shown in Figure \ref{fig:FFT_illu}, the set $S$ corresponds to the blue regions (the left panel depicts the 1D case; see more details in \cite[Section 3.1.2]{christlieb2025sampling} and the right panel illustrates the 2D case) as an illustration.
The red region corresponds to the origin $\bm{0} = (0,\ldots,0)$, where the function value remains unchanged during the update. Furthermore, we use $S^0$ to denote $S \cup \{\bm{0}\}$. To interpret Figure \ref{fig:FFT_illu}, we define:
\begin{align}\label{eqn:AB_def}
    A_\mu & = \{ \bm{k}_{\bm{v}} = (k_{j_1}^{(1)},k_{j_2}^{(2)})| k^{(\mu)}_{j_{\mu}} = k^{(\mu)}_{Nyq}, \, k^{(\mu^c)}_{j_{2}} \in (K_{\bm{v}}^{(\mu^c)})^-\}, \\
    & \qquad \mu \in  \{1,2\}, \, \mu^c \in \{1,2\}\backslash\{\mu\}; \nonumber \\
    A_0 & = \{ \bm{k}_{\bm{v}} = (k_{j_1}^{(1)},k_{j_2}^{(2)})| k^{(\mu)}_{j_{\mu}} \in (K_{\bm{v}}^{(\mu)})^-, \mu \in \{1,2\} \}; \\ 
    C_0 & = \{ \bm{k}_{\bm{v}} = (k_{j_1}^{(1)},k_{j_2}^{(2)})| k^{(\mu)}_{j_{\mu}} \in (K_{\bm{v}}^{(\mu)})^+, \mu \in \{1,2\} \}; \\
    C_\mu & = \{ \bm{k}_{\bm{v}} = (k_{j_1}^{(1)},k_{j_2}^{(2)})| k^{(\mu)}_{j_{\mu}} = k^{(\mu)}_{Nyq}, \, k^{(\mu^c)}_{j_{2}} \in (K_{\bm{v}}^{(\mu^c)})^-\}, \\
    & \qquad \mu \in  \{1,2\}, \, \mu^c \in \{1,2\}\backslash\{\mu\}. \nonumber 
\end{align}
\end{itemize}
\begin{figure}[htbp]
    \centering
    \includegraphics[width=0.8\linewidth]{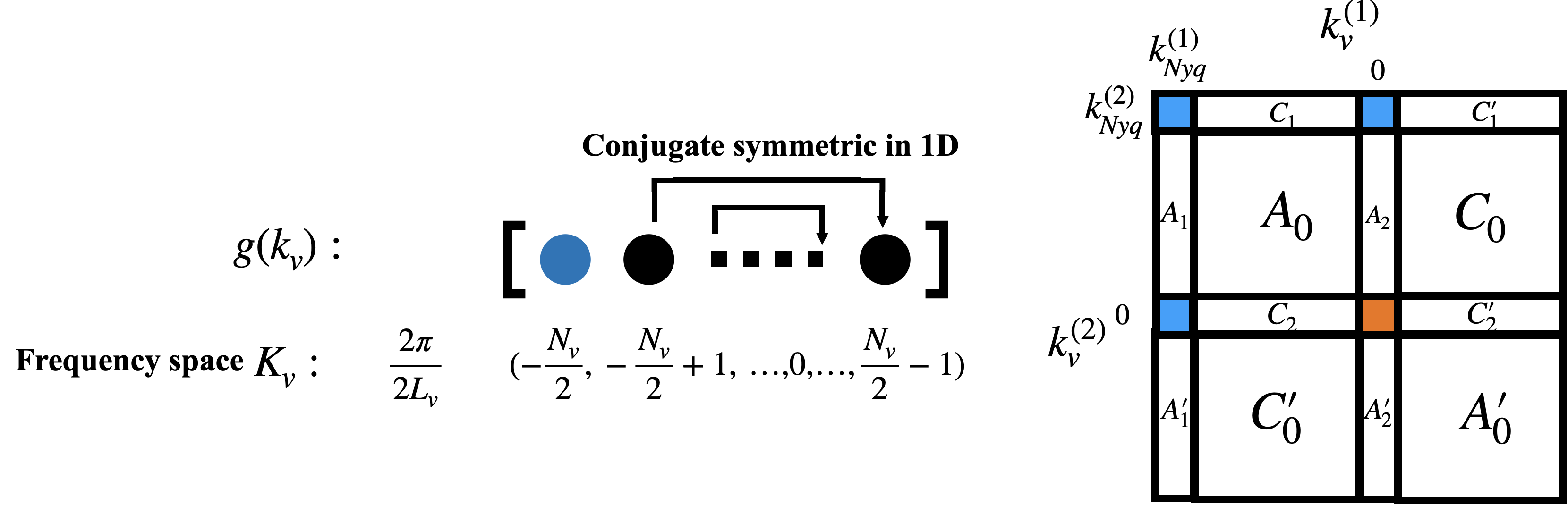}
    \caption{Discrete velocity-Fourier pairing used to enforce Hermitian symmetry with 1D on the left and 2D on the right. Blue Nyquist modes have no conjugate partners and are set to zero; the origin is retained exactly. The remaining regions are paired by index reversal and complex conjugation, which guarantees a real inverse velocity transform up to roundoff for the full-rank solver. }
    \label{fig:FFT_illu}
\end{figure}

For completeness, we present a definition of the structure-preserving property in the context of this paper. 
\begin{definition}[Structure-Preserving Scheme]\label{def:structure-preserving}
    A numerical scheme is said to be structure-preserving if it retains key analytical or physical properties of the continuous system at the discrete level. 
\end{definition}
In the present Wigner--Poisson setting, structure preservation refers to two properties. First, the velocity-Fourier update must preserve the Hermitian symmetry required for the inverse transform to remain real-valued. In particular, if $f(\bm{x},\bm{v},t_0)$ is real-valued, then the updated Fourier representation $\tilde f(\bm{x},\bm{k}_{\bm v},t_0+\Delta t)$ in \eqref{eq:FFT_updateform} must retain the Hermitian symmetry needed to recover a real-valued distribution. Second, the scheme should preserve the global invariants of the Wigner--Poisson system: mass, momentum, and self-consistent total energy. These two requirements are enforced in the sampling-based adaptive-rank algorithm developed later.

\section{Adaptive-rank Structure-preserving High-Dimensional WP Solver}\label{sec:highD_WP_solver}
Adaptive sampling and rank truncation introduce two structural difficulties in high-dimensional Wigner--Poisson simulations. First, they can destroy the Hermitian symmetry of the velocity-Fourier representation, which is needed for the reconstructed distribution function to remain real-valued. Second, they can break the global conservation of mass, momentum, and self-consistent total energy. This section develops the structure-preserving components needed to address these issues. We first recall the HTACA notation used in the adaptive-rank formulation, then introduce a Hermitian-symmetry-preserving Fourier update and a global moment correction. These ingredients are then combined into the complete adaptive-rank Wigner--Poisson solver.

Sampling-based low-rank approximation is rooted in adaptive cross approximation
and pseudoskeleton ideas, where selected rows, columns, or fibers are used to
construct compressed representations without assembling the full array
\cite{Bebendorf2000,BebendorfRjasanow2003,Goreinov1997}. Tensor extensions
include TT-cross \cite{OseledetsTyrtyshnikov2010}, black-box HT approximation
\cite{Ballani2013}, and tree-adaptive hierarchical tensor approximation
\cite{BallaniGrasedyck2014}. The present work builds on these ideas but adds
WP-specific symmetry and conservation mechanisms.

\subsection{Hierarchical Tucker Adaptive Cross Approximation}
\label{subsec:HTACA}

To reduce the cost of high-dimensional phase-space discretizations, we use the
hierarchical Tucker adaptive cross approximation (HTACA) framework of
\cite{zheng2025semihigh}. HTACA constructs a compressed hierarchical Tucker
representation from selected tensor entries, so the full tensor does not need
to be assembled. In this subsection, we introduce the notation, the tensor-tree structures used later, and the HTACA parameters needed for the symmetry-preserving Fourier update and conservation correction.

Let 
\[
\mathcal{X}\in \mathbb{C}^{n_1\times n_2\times \cdots \times n_m}
\]
denote a generic order-$m$ tensor. In the $d$-dimensional WP system considered
in this paper, the phase-space variables are
$\bm{x}\in\mathbb{R}^d$ and $\bm{v}\in\mathbb{R}^d$, so the corresponding
phase-space tensor has order $m=2d$. For example, 2D2V and 3D3V simulations
lead to order-four and order-six tensors, respectively. A hierarchical Tucker decomposition (HTD) represents this order-$m$ tensor using a
binary dimension tree $\mathcal{T}$, together with leaf frames, transfer
matrices, and hierarchical ranks $\{r_\alpha\}_{\alpha\in\mathcal{T}}.$
Each node $\alpha\in\mathcal{T}$ corresponds to a subset of tensor modes.

\begin{definition}[Dimension tree]
A binary tree $\mathcal{T}$, with each node represented by a subset of
$\{1,2,\ldots,m\}$, is called a dimension tree if it satisfies the following
conditions:
\begin{itemize}
    \item the root node is $\{1,2,\ldots,m\}$;
    \item each leaf node contains a single mode index;
    \item the two children of every non-leaf node are disjoint;
    \item each parent node is the union of its two children.
\end{itemize}
The sets of leaf and non-leaf nodes are denoted by
$\mathcal{L}(\mathcal{T})$ and
$\mathcal{N}(\mathcal{T})=\mathcal{T}\setminus\mathcal{L}(\mathcal{T})$,
respectively. For each non-leaf node
$\alpha\in\mathcal{N}(\mathcal{T})$, we denote its left and right children by
$\alpha_l$ and $\alpha_r$.
\end{definition}
To simplify notation, we assume that for each parent node, the mode indices in
the left child are smaller than those in the right child. A dimension tree for
an order-$m$ tensor contains exactly $m-1$ non-leaf nodes
\cite{grasedyck2010hierarchical_SIMAA}. \Cref{fig:dimension_trees} shows the two tree structures used in the numerical
experiments. The balanced four-mode tree is used for 2D2V simulations, while
the six-mode tree is used for 3D3V simulations. 

\begin{figure}[htbp]
    \centering
    \begin{subfigure}{0.49\textwidth}
        \centering
        \includegraphics[width=0.5\linewidth]{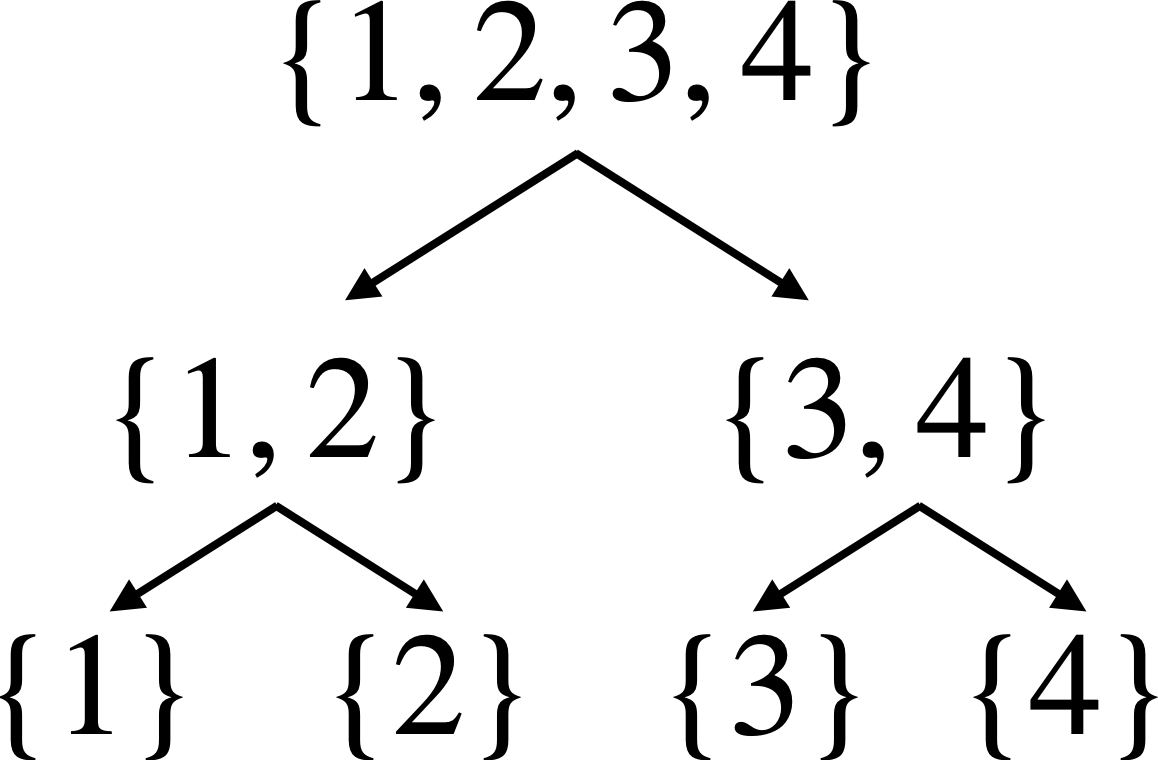}
        \caption{Balanced tree for 4D tensors.}
        \label{fig:balanced_tree_4d}
    \end{subfigure}
    \hfill
    \begin{subfigure}{0.49\textwidth}
        \centering
        \includegraphics[width=0.7\linewidth]{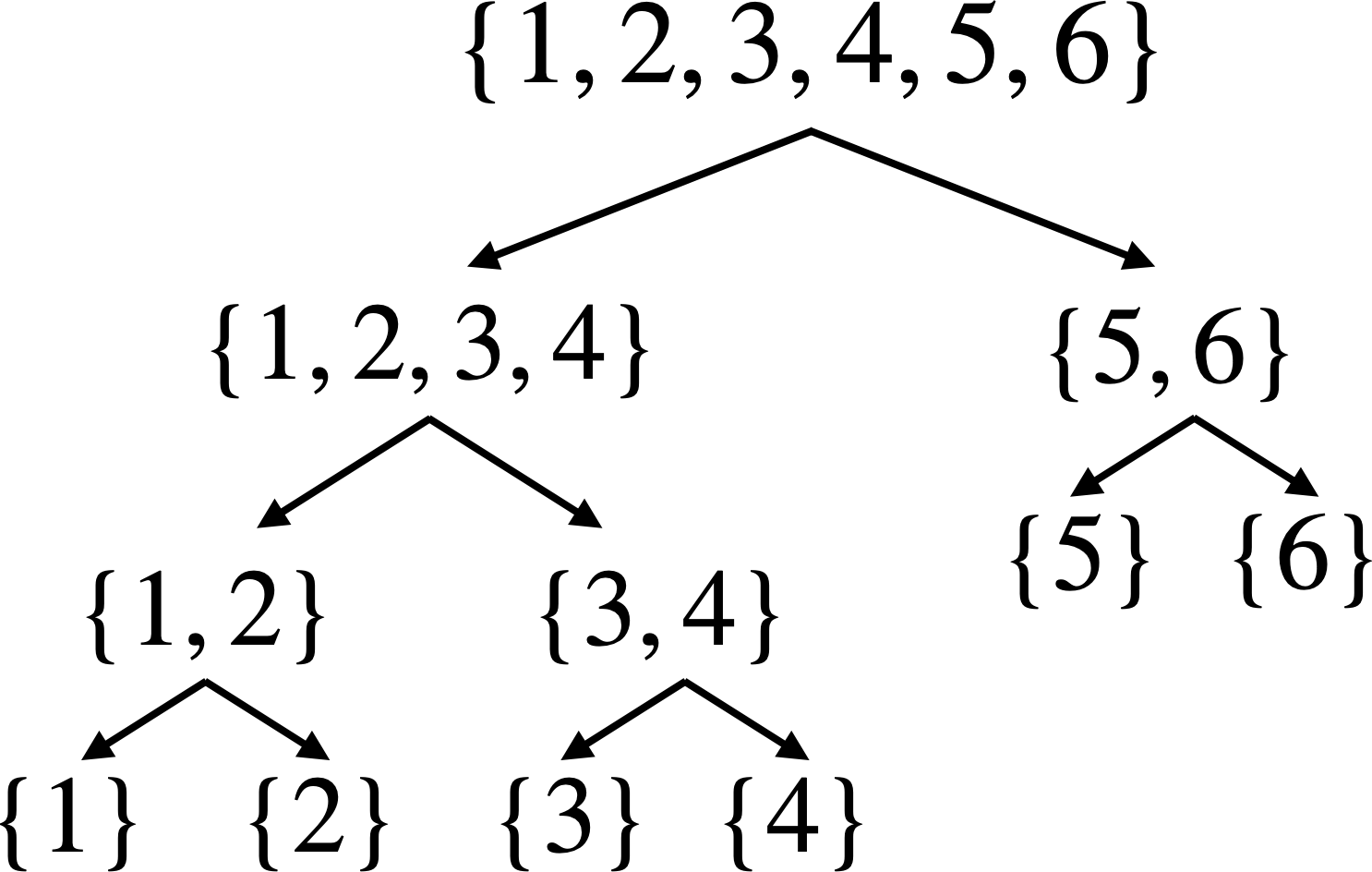}
        \caption{Unbalanced tree for 6D tensors.}
        \label{fig:unbalanced_tree_6d}
    \end{subfigure}

    \caption{Examples of hierarchical dimension trees used for the 2D2V and
    3D3V phase-space tensors. }
    \label{fig:dimension_trees}
\end{figure}

For each node $\alpha\in\mathcal{T}$, let
$\mathbb{I}_\alpha =
\prod_{\mu\in\alpha} \{1,\ldots,n_\mu\}$
denote the corresponding multi-index set. For a leaf node
$\alpha\in\mathcal{L}(\mathcal{T})$, the HTD stores a leaf frame
$U_\alpha \in \mathbb{C}^{n_\alpha \times r_\alpha}.$
For a non-leaf node $\alpha\in\mathcal{N}(\mathcal{T})$ with children
$\alpha_l$ and $\alpha_r$, the coupling between the two child subspaces is
represented by a transfer matrix
$
B_\alpha \in
\mathbb{C}^{r_{\alpha_l}r_{\alpha_r}\times r_\alpha}.
$
The corresponding frame at node $\alpha$ is recursively defined by
\begin{equation}
\label{eq:HTD_foundation}
U_\alpha =
(U_{\alpha_r}\otimes U_{\alpha_l})B_\alpha .
\end{equation}

\begin{definition}[HTD]
A tensor
$\mathcal{X}\in
\mathcal{H}\text{-Tucker}\bigl((r_\alpha)_{\alpha\in\mathcal{T}}\bigr)$
is represented by the collection of transfer matrices and leaf frames
\[
\mathcal{X}
=
\left(
(B_\alpha)_{\alpha\in\mathcal{N}(\mathcal{T})},
(U_\alpha)_{\alpha\in\mathcal{L}(\mathcal{T})}
\right),
\]
satisfying the recursive relation~\eqref{eq:HTD_foundation} for all non-leaf
nodes.
\end{definition}

The HTACA algorithm constructs such an HTD representation using only selected
entries of the target tensor. In our implementation, the target tensor is not
formed explicitly. Instead, its entries are provided by a function handle
generated from the semi-Lagrangian update, the Poisson solver, or the
velocity-Fourier update. We write the adaptive compression abstractly as
\begin{equation}
\label{alg:HTACA}
\widetilde{\mathcal{X}}
=
\mathrm{HTACA}
\left(
\mathcal{X},
\mathcal{T},
\varepsilon_{\mathrm{Base}},
r_{\min},
r_{\max}
\right),
\end{equation}
where $\mathcal{T}$ is the prescribed dimension tree,
$\varepsilon_{\mathrm{Base}}$ is the baseline tolerance used for pivot
selection and singular-value truncation, and $r_{\min}$ and $r_{\max}$ are the
lower and upper admissible hierarchical ranks.

In the semi-Lagrangian steps,
HTACA approximates the tensor generated by tracing characteristics and applying
the WENO reconstruction formula. In the Fourier step, HTACA approximates the
tensor generated by the velocity-Fourier update. In the conservation correction,
the HTD structure is used to extract the leading rank-one mode and to evaluate
moments by tensor quadrature.

The storage cost of an HTD representation is substantially smaller than that of
the full tensor when the hierarchical ranks remain moderate. If all tensor
modes have size at most $N$ and all hierarchical ranks are at most $r$,
then the storage scales as
\[
\mathcal{O}(m N r + m r^3),
\]
instead of the full tensor cost $\mathcal{O}(N^m)$. For a balanced dimension
tree with uniform mode size $N$ and uniform rank $r$, the HTACA compression step
may request up to
\[
\mathcal{O}\bigl(m N r^{\lceil \log_2 m\rceil}\bigr)
\]
tensor entries in the worst case. In the WP solver, each requested entry is
evaluated through the corresponding update formula, so the total cost depends
on both the number of sampled entries and the cost of each local reconstruction.
When the ranks remain bounded, the observed cost in the numerical examples
scales nearly linearly with respect to $N$.

Finally, we emphasize that HTD tensor addition requires all operands to share
the same dimension tree. This restriction is important for the
symmetry-preserving construction in the next subsection, where several masked
velocity-frequency sub-region tensors are generated separately and then added in
HTD form. For this reason, all masked tensors introduced below are embedded in
the same global tensor dimensions and represented using the same tree
structure.

\subsection{Hermitian-symmetry-preserving scheme}\label{subsec:structure-preserving}
A standard HTACA approximation does not automatically preserve the Hermitian
symmetry of the velocity-Fourier representation. This is problematic for the WP Fourier
update, since real-valuedness of $f(\bm{x},\bm{v},t)$ requires
\[
\hat f(\bm{x},\bm{k}_{\bm{v}},t)
=
\overline{\hat f(\bm{x},-\bm{k}_{\bm{v}},t)}.
\]
Symmetry defects therefore generate artificial imaginary components after the
inverse Fourier transform. In 1D1V, this property can be preserved by a
symmetric selection procedure \cite[Theorem 3.3]{christlieb2025sampling}. In
higher dimensions, however, this strategy is no longer compatible with
HTACA, because hierarchical recompression does not exactly preserve the sampled
skeleton values. We therefore approximate only a set of non-symmetric base
Fourier modes by HTACA and generate the remaining modes by index reversal and
complex conjugation; Nyquist modes are zeroed and the origin is copied exactly.

However, the underlying tree structure makes this mapping a non-trivial process.
In particular, we must consider three inherent properties of HTACA:
\begin{itemize}
    \item HTD addition strictly requires all operands to share an identical tree structure.
    \item Under its original tree structure, HTACA inherently requires the target domain to be a complete Cartesian product of the index spaces (i.e., a full matrix or regular tensor grid). While we can apply HTACA to a subset of the original matrix by zero-padding specific leaf indices, this subset must still form a regular, block-aligned Cartesian grid. It cannot be directly applied to geometrically irregular subsets, such as a matrix with omitted corners.
    \item Furthermore, HTACA is designed for low-rank scenarios where $r \ll N_{\min}$. Consequently, it becomes inapplicable to highly rectangular matrices or lower-dimensional slices (e.g., pure vectors). Because CUR-type approximations require selecting an equal number of rows and columns, the method fails if the required rank exceeds the length of the shortest dimension.
\end{itemize}
We describe the 2D version; the 3D construction is analogous. Based on the full-rank solver discussed in Section \ref{subsec:FFT and Wigner}, the non-symmetric base elements correspond to the union $\bigcup_{i=0}^2 (A_i \cup C_i) \cup S^0$. As illustrated in Figure \ref{fig:sym_illu1}, this union constitutes an irregular index domain rather than a regular Cartesian block. In particular, $A_1$, $A_2$, $C_1$, and $C_2$ are vectors, making their highly rectangular shapes incompatible with direct HTACA under a unified tree structure.
To overcome this structural limitation, at the time level $t_{n+1} = t_n+\Delta t$, we decompose the target tensor $\mathcal{X} = \tilde{f}(\bm{x},\bm{k}_{\bm{v}},t_{n+1})$ in \eqref{eq:FFT_updateform} into independent sub-regions:
$$\mathcal{X} = \mathcal{X}^{S^0} + \mathcal{X}^{A} + \mathcal{X}^{A'} + \mathcal{X}^{C} + \mathcal{X}^{C'},$$
where each component $\mathcal{X}^*$ (for $* \in \{S^0, A, A', C, C'\}$) denotes a masked tensor that shares the exact same tree structure as the global tensor $\mathcal{X}$. Specifically, $\mathcal{X}^*$ retains the exact values of $\mathcal{X}$ within the designated region $*$ and is strictly zero-padded everywhere else over the velocity space.
Rather than computing the full tensor at once, we compute only the independent ``base" elements and use HT tensor addition to superimpose the masked regions. The overall process consists of three steps:
\begin{itemize}
    \item {\bf Step I: Domain Partitioning.} We partition the velocity-frequency space $(k_{v}^{(1)}, k_{v}^{(2)})$ into structurally independent regions: $\{A_i, C_i, A_i', C_i'\}_{i = 0}^2$ and the corner positions denoted by $S^0$ (shown in blue and orange in Figure \ref{fig:sym_illu1}). 
    \item {\bf Step II: Sub-region HTACA.} We isolate two primary target sub-regions, block $C_0$, shown in dark green in Figure \ref{fig:sym_illu1}, and the dark gray region covering $\{A_i, C_i\}_{i = 1}^2 \cup A_0 \cup S^0$. Because subsequent superposition requires identical tree structures, we exclusively sample the target region to satisfy the error tolerance while padding the remainder of the tensor domain with zeros. This zero-padding isolates the target region from external structural artifacts while maintaining the required global dimensions.
    \item {\bf Step III: Symmetry Mapping and Superposition.} We apply the requisite structural transformation to the evaluated block (reversing the frequency indices and applying the complex conjugate to $C_0$) to generate its exact symmetric counterpart ($C_0'$, shown in light cyan). This process is repeated for the dark gray containing $\{A_i, C_i\}_{i = 0}^2$ to generate light gray region containing $\{A_i', C_i'\}_{i = 0}^2$. Finally, we sum the independent HT tensors together, denoted by $\mathcal{X}_{\text{base}}$. Special handling is required for the $S^0$-regions to ensure they remain strictly real-valued while keeping sufficient accuracy.  Specifically, the values in the blue $S$-regions are set to zero, while the orange $\{\bm{0}\}$-region must remain unchanged, perfectly matching the values of the input function $\tilde{f}(\bm{x}, \bm{0}, t_{n})$. To manipulate these specific values while rigorously preserving the shared tree structure required for HT tensor addition, we employ a subtract-and-add masking technique:
    \[
    \mathcal{X} = \mathcal{X}_{\text{base}} -   \mathcal{X}_{\text{base}}^{S^0} + \mathcal{X}^{\bm{0}}. 
    \]
    Here, $\mathcal{X}_{\text{base}}^{S^0}$ is a masked tensor that isolates the corner $S^0$-regions (subtracted from the base to enforce the zero condition), and $\mathcal{X}^{\bm{0}}$ is a masked tensor that isolates the orange region (added to inject the exact real values of the input function). Finally, a tensor truncation is applied to control the rank of the resulting sum.
\end{itemize}

\begin{figure}[htbp]
    \centering
    \includegraphics[width=0.7\linewidth]{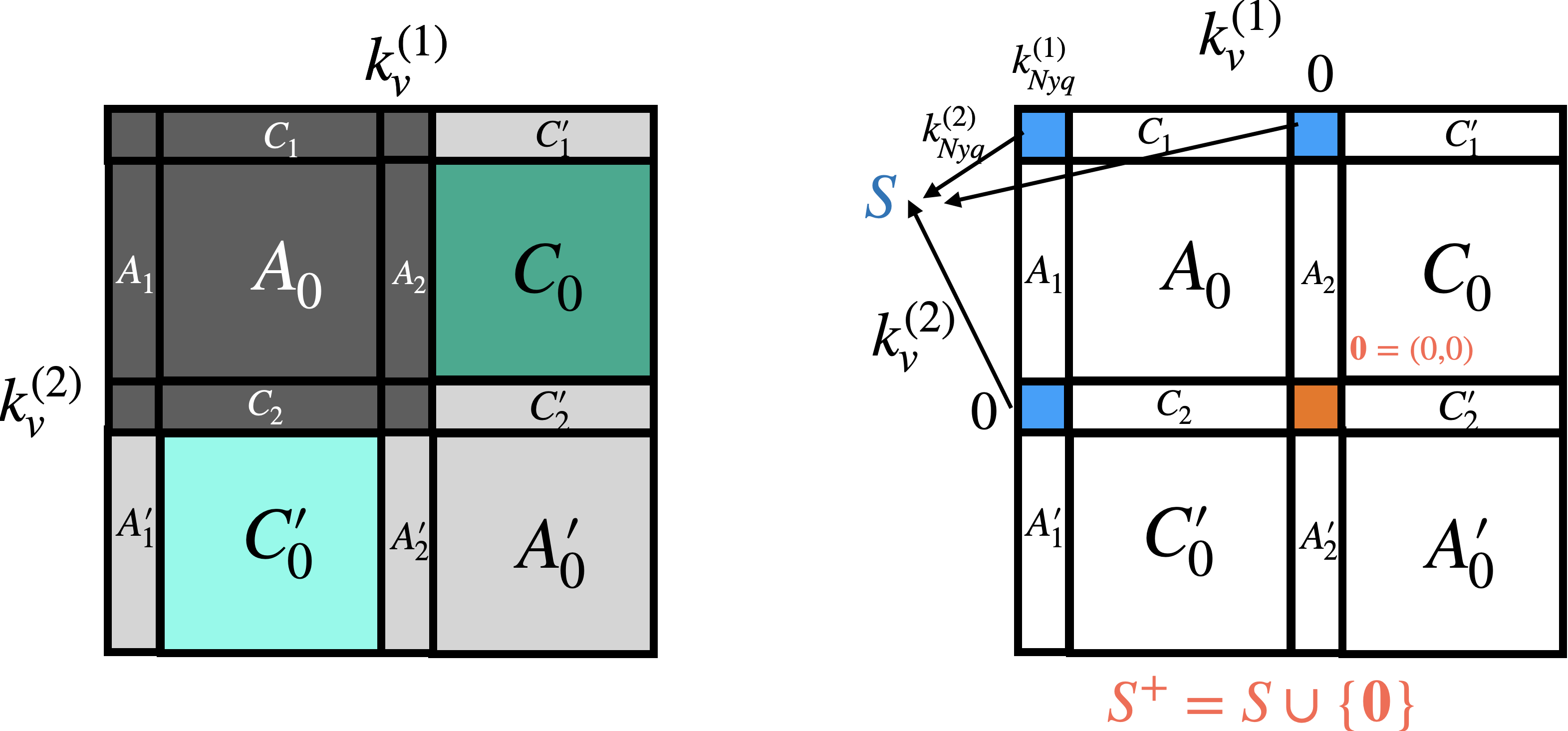}
    \caption{
    Hermitian-symmetry-preserving decomposition of the 2D velocity-Fourier plane. HTACA samples only independent base regions (dark green and dark gray); the paired regions are filled by index reversal and complex conjugation. Blue Nyquist entries are zeroed and the orange origin is restored exactly by the subtract-and-add mask, so the reconstructed Fourier tensor remains Hermitian.}
    \label{fig:sym_illu1}
\end{figure}

\subsection{Global conservation laws}\label{subsec:global_conservation}
In addition to the Hermitian symmetry property discussed in the previous section, preserving global invariants such as mass, momentum, and energy is critical for physically meaningful simulations. However, in our proposed framework, these conservation laws are generally not satisfied for two primary reasons. First, the underlying full-rank semi-Lagrangian solver does not inherently enforce global conservation. Second, even if a conservative full-rank solver were employed, the adaptive-rank procedure—which relies on truncating the solution to a lower-rank approximation—inevitably breaks these invariants. Therefore, an explicit correction step is required. To achieve this while balancing efficiency and accuracy, we propose a targeted correction procedure that restores global conservation without the computational burden of explicitly solving the macroscopic quantum fluid model \eqref{eqn:quantum_fluid_model} for $\rho$, $\bm{J}$, and $P$.

The core of our approach is to isolate the principal mode of the solution. Specifically, we utilize the first rank-one approximation of the kinetic solution, denoted by $f_1$, as the basis for this correction. There are several compelling reasons for this choice. First, $f_1$ resides within the same HTA tree structure as the uncorrected solution $f$ and naturally captures its dominant structural information. By relying on this principal mode, we ensure that the required correction is minimal, thereby preserving the robustness of the original approximation. Furthermore, this choice yields highly desirable positivity properties. For models such as the VP system, where the exact distribution is strictly non-negative, the Perron-Frobenius theorem guarantees that the leading singular mode of an all-positive matrix is also non-negative (1D1V VP). This theoretical guarantee does not strictly apply to the WP system, since the Wigner function inherently permits negative values. However, in light of this theorem, our numerical experiments consistently demonstrate that the macroscopic density remains positive for both $f$ and its principal mode $f_1$; see Table \ref{tab:min-rho-over-time}.

\begin{table}[htbp]
\centering
\caption{Minimum values of the macroscopic density $\rho$ over the final time $T = 30$ for both $f$ and its principal mode $f_1$ for 2D2V case. All simulations use the same numerical setup described in Section \ref{sec:numerical}.}
\label{tab:min-rho-over-time}
\begin{tabular}{c c c c}
\hline
Example & $H$ & $\min_{0 \le t \le 30} \rho$ for $f$ & $\min_{0 \le t \le 30} \rho$ for $f_1$ \\
\hline
TSI      & 8 & 0.0119 & 0.0224 \\
TSI       & 1 & 0.0366& 0.0119 \\
SLD      & 8 & 0.3083 & 0.0038 \\
SLD      & 1 & 0.3130 & 0.0041 \\
\hline
\end{tabular}
\end{table}

Our goal is to determine a coefficient vector
$c=(c_0,c_1,\ldots,c_d,c_E)^T$, which defines a correction polynomial restricted to the principal rank-one mode,
\begin{equation}\label{eqn:conservative_qc}
    q_c(\bm{v})
=
c_0+\sum_{i=1}^d c_i v_i+c_E|\bm{v}|^2,
\end{equation}
such that the corrected distribution function
\[f_c(\bm{x},\bm{v})
=
f(\bm{x},\bm{v})
+
f_1(\bm{x},\bm{v})q_c(\bm{v})
\]
restores the prescribed discrete invariants
\[
\mathcal{M}^\star,\quad
\mathcal{J}_1^\star,\ldots,\mathcal{J}_d^\star,\quad
\mathcal{E}^\star .
\]
These target values are the initial discrete mass, momentum, and total energy computed from the given initial data using the same quadrature rule. The coefficients are determined by imposing the conservation constraints on $f_c$. Since the correction is linear in $c$, this yields the linear system
\begin{equation}\label{eqn:conservation_linear}
    A c = \bm{R}.
\end{equation}
Here $A\in\mathbb{R}^{(d+2)\times(d+2)}$ is the moment matrix associated with $f_1$, given by
\[
A=
\begin{bmatrix}
\langle f_1\rangle_{\Omega}
&
\langle v_1 f_1\rangle_{\Omega}
&
\cdots
&
\langle v_d f_1\rangle_{\Omega}
&
\langle |\bm{v}|^2 f_1\rangle_{\Omega}
\\
\langle v_1 f_1\rangle_{\Omega}
&
\langle v_1^2 f_1\rangle_{\Omega}
&
\cdots
&
\langle v_1v_d f_1\rangle_{\Omega}
&
\langle v_1|\bm{v}|^2 f_1\rangle_{\Omega}
\\
\vdots & \vdots & \ddots & \vdots & \vdots
\\
\langle v_d f_1\rangle_{\Omega}
&
\langle v_dv_1 f_1\rangle_{\Omega}
&
\cdots
&
\langle v_d^2 f_1\rangle_{\Omega}
&
\langle v_d|\bm{v}|^2 f_1\rangle_{\Omega}
\\
\frac{1}{2}\langle |\bm{v}|^2 f_1\rangle_{\Omega}
&
\frac{1}{2}\langle |\bm{v}|^2v_1 f_1\rangle_{\Omega}
&
\cdots
&
\frac{1}{2}\langle |\bm{v}|^2v_d f_1\rangle_{\Omega}
&
\frac{1}{2}\langle |\bm{v}|^4 f_1\rangle_{\Omega}
\end{bmatrix}.
\]
The residual vector $\bm{R}$ contains the defects of the uncorrected state $f$:
\[
\begin{aligned}
\bm{R}
&=
\left(
\mathcal{M}^\star-\mathcal{M}[f],
\mathcal{J}_1^\star-\mathcal{J}_1[f],
\ldots,
\mathcal{J}_d^\star-\mathcal{J}_d[f],
\mathcal{E}^\star-\mathcal{E}[f]
\right)^T
\\
&=
\left(
\Delta\mathcal{M},
\Delta\mathcal{J}_1,
\ldots,
\Delta\mathcal{J}_d,
\Delta\mathcal{E}
\right)^T .
\end{aligned}
\]

In the above formulation, the correction step is implemented by directly solving the linear system $A c=\bm{R}$. The total energy must be evaluated consistently with the current time level. Therefore, after the second semi-Lagrangian update, the electric potential is recomputed from the current density and used in the evaluation of $\mathcal{E}[f]$. One may also ask whether the correction itself changes the electric potential and hence the electric field energy. Under the spatially uniform LoMaC-type correction, this contribution vanishes, so the correction remains a purely linear solve; see more details in Appendix \ref{apdix:energy}.

\subsection{Adaptive-rank high-dimensional WP solver}
In this section, we now summarize the complete scheme by applying HTACA to the
improved full-rank solver and incorporating the structure-preserving techniques introduced in the previous section. Algorithm \ref{alg:adaptiverank-WP solver} gives the full procedure; below, we highlight the essential
components:
\begin{enumerate}
    \item \textbf{Step 1} applies HTACA compression to the full-rank SL WENO scheme. The function handle to access the tensor entry is defined by evaluating the WENO update formula~\eqref{eq:SL_update} at the corresponding characteristic trace-back point; see details in Section \ref{subsec:SSmethod}. The HTACA algorithm (Equation \eqref{alg:HTACA} and details in \cite{zheng2025semihigh}) is applied to obtain $\mathcal{F}^{*,n+1/2}_{\alpha} = ((B_\beta^{*,n+1/2})_{\beta \in \mathcal{N}(\mathcal{T}_\alpha)},
 (U_\beta^{*,n+1/2})_{\beta \in \mathcal{L}(\mathcal{T}_\alpha)})$ in a low-rank HTD format. 

    \item \textbf{Step 2} solves the Poisson equation using the FFT method as in Section~\ref{subsec:SSmethod}, but using HTACA compression for the solver and the trapezoid rule in HTD form with low-rank complexity. In particular, as the spatial information is encoded in the leaf-node bases of the HTD, the FFT can be applied mode-wise to these matrices. The Fourier-transformed HTD of the charge density has the form
\[\widehat{\bm{\rho}} = \bigl((B_\alpha)_{\alpha \in \mathcal{N}(\mathcal{T}_{\bm{x}})}, (\widehat{U}_\alpha)_{\alpha \in \mathcal{L}(\mathcal{T}_{\bm{x}})}\bigr),\]
where each $\widehat{U}_\alpha$ is obtained by applying a 1D FFT to the columns of $U_\alpha$.  
Since $U_\alpha \in \mathbb{C}^{N_\alpha \times r_\alpha}$, the FFT is applied along rows using batched FFTs. This mode-wise FFT achieves complexity $\mathcal{O}(d_x r N \log N)$, compared with the full-grid cost $\mathcal{O}(N^{d_x} \log N)$, and is well suited to parallelization. Furthermore, we treat \cref{eq:FFT Poisson solution} as the function handle, using HTACA to get the approximation of $\Phi^{n+1/2}$: $\mathcal{P}^{n+1/2}_{\alpha}$. 

    \item \textbf{Step 3} performs the Fourier update under the adaptive-rank framework. First, similar to \textbf{Step 2} but for the velocity leaf-nodes, the FFT is applied mode-wise to these matrices, yielding the Fourier-transformed HTD $\hat{\mathcal{F}}^{*,n+1/2}_\alpha$.
    The function handle to access the matrix is defined using  ~\eqref{eqn:FFT_tensor}. To preserve Hermitian symmetry, we apply the HTACA algorithm to the non-symmetric part and then map the result to the whole domain in a specific way to obtain the Fourier update $\hat{\mathcal{F}}^{*,n+1}_\alpha$; see details in Section \ref{subsec:FFT and Wigner}. 
    After we construct $\hat{\mathcal{F}}^{*,n+1}_\alpha$, the inverse FFT is applied mode-wise to return to a
    real-valued HTD representation $\mathcal{F}^{*,n+1}_\alpha$ (up to machine precision).
    \item \textbf{Step 4} applies another HTACA semi-Lagrangian update, starting from $\mathcal{F}^{*,n+1}_\alpha$ to compute the uncorrected future state $\mathcal{F}^{n+1,NC}_\alpha$. At this stage, the macroscopic quantities—such as total mass $\mathcal{M}[f]$, total momentum $\mathcal{J}[f]$, and total energy $\mathcal{E}[f]$—are computed directly from the HTD form using multi-dimensional trapezoidal quadrature weights. For example, the uncorrected mass is evaluated as:
    \[
    \mathcal{M}[f] = \sum_{i_1 \dots i_d} \sum_{j_1 \dots j_d}  W_{i_1,\ldots,i_d,j_1,\ldots,j_d} \mathcal{F}^{n+1,NC}_{i_1,\ldots,i_d,j_1,\ldots,j_d} \prod_{\mu=1}^d \left( \Delta x^{(\mu)} \Delta v^{(\mu)} \right),
    \]
    where $W_{i_1,\ldots,i_d,j_1,\ldots,j_d}$ represents the quadrature weights (equal to $1$ in the interior and appropriately scaled by factors of $1/2$ on the boundaries).
    
    \item \textbf{Step 5} applies the global conservation correction detailed in Section \ref{subsec:global_conservation} to enforce the exact preservation of mass, momentum, and self-consistent total energy. We extract the principal rank-one approximation, denoted as $\mathcal{F}_1$, directly from the HTD of the uncorrected solution $\mathcal{F}^{n+1,NC}_{\alpha}$. By comparing the computed macroscopic moments of $\mathcal{F}^{n+1,NC}_{\alpha}$ against the exact target invariants $(\mathcal{M}^\star, \mathcal{J}_1^\star, \dots, \mathcal{J}_d^\star, \mathcal{E}^\star)$, we form the residual vector $\bm{R}$. We then solve the linear system \eqref{eqn:conservation_linear}. This yields the coefficients $c$ for the velocity correction polynomial $q_c(\bm{v})$ defined in \eqref{eqn:conservative_qc}. The final conservative solution is obtained by updating the state:
    \[
    \mathcal{F}^{n+1}_{\alpha} = \mathcal{F}^{n+1,NC}_{\alpha} + \mathcal{F}_1 q_c(\bm{v}).
    \]
    Since tensor addition increases the hierarchical rank, rank control is applied at the beginning of the next time step, before the following semi-Lagrangian update; see lines 3-5 of {\bf Step 1}. No additional truncation is applied after the correction, so the state advanced from this step satisfies the prescribed discrete invariants up to roundoff.
\end{enumerate}

\begin{algorithm}
\caption{Algorithm: Adaptive-Rank WP Solver with HTACA Compression}
\label{alg:adaptiverank-WP solver}
\begin{algorithmic}[1]
\Require Perform the spatial discretization \eqref{eqn:spatial_dom_dis} and velocity discretization \eqref{eqn:velocity_dom_dis}. Recall the notations: $f^{l}_{i_1,\ldots,i_d,j_1,\ldots,j_d}$ denotes the numerical solution at $(\bm{x}_{i_1,\ldots,i_d},\bm{v}_{j_1,\ldots,j_d})$, the index $l \in \{(*,n+1/2), (*,n+1),n,n+1\}$ represents different time levels and $\Phi^{n+1/2}_{i_1,\ldots,i_d}$ represents the potential approximation at $(\bm{x}_{i_1,\ldots,i_d},(n+1/2)\Delta t)$. In this solver, $f^{l}_{i_1,\ldots,i_d,j_1,\ldots,j_d}$ and $\Phi^{n+1/2}_{i_1,\ldots,i_d}$ are represented in the tensor form $\mathcal{F}^{l}_{\alpha}$ and $\mathcal{P}^{n+1/2}_{\alpha}$ with the tree structure needed for the HTACA method \eqref{alg:HTACA}. 
\While{$t < T$} 
    \State \textbf{Step 1: Semi-Lagrangian Method}
    \If{$n \neq 0$}
    \State Apply HTD truncation to $\mathcal{F}^{n}_{\alpha}$ to control rank growth.
    \EndIf
    \State Use~ the SL-WENO step \cref{eq:SL_update} as the function handle; apply  HTACA to obtain $\mathcal{F}^{*,n+1/2}_{\alpha} = ((B_\beta^{*,n+1/2})_{\beta \in \mathcal{N}(\mathcal{T}_\alpha)},
 (U_\beta^{*,n+1/2})_{\beta \in \mathcal{L}(\mathcal{T}_\alpha)})$.
    \State \textbf{Step 2: Poisson Equation Step}
    \State Use the trapezoidal rule in the tensor setting for $\mathcal{F}^{*,n+1/2}_{\alpha}$ along velocity space to compute the source term. 
    \State The Poisson equation \eqref{eq:poisson} is solved by the FFT in \cref{eq:FFT Poisson solution}, with periodic boundary conditions, and zero-mean potential. The Fourier transform is applied to the leaf-node matrices for the spatial axes, and then \cref{eq:FFT Poisson solution} is used as the function handle to apply HTACA to obtain an approximation of the potential $\mathcal{P}^{n+1/2}_{\alpha}$. 
    \State \textbf{Step 3: Fourier-ODE Step}
    \State Apply FFT to $\mathcal{F}^{*,n+1/2}_{\alpha}$ along velocity space to get $\hat{\mathcal{F}}^{*,n+1/2}_{\alpha}$;
    \State Solve ODE in Fourier space with \eqref{eq:FFT_updateform} and \eqref{eqn:expo_g} and specifically,  
    \begin{equation}\label{eqn:FFT_tensor}
        \hat{\mathcal{F}}^{*,n+1}_{i_1,\ldots,i_d,j_1,\ldots,j_d} = \hat{\mathcal{F}}^{*,n+1/2}_{i_1,\ldots,i_d,j_1,\ldots,j_d}g^{\bm{x}_{i_1,\ldots,i_d}}(\bm{k}_{\bm{v};{j_1,\ldots,j_d}}),
    \end{equation}
    \State Here, the reconstruction of potential $\mathcal{P}^{n+1/2}_{\alpha}$ at the off-grid point is used in \eqref{eqn:expo_g}. Apply structure-preserving HTACA with \eqref{eqn:FFT_tensor} serving as the function handle. Details of the structure-preserving properties and implementation are given in Section \ref{subsec:FFT and Wigner} and Section \ref{subsec:structure-preserving}. 
    \State Apply inverse FFT column-wise to the velocity-dependent leaf-node matrices to recover real-valued $\mathcal{F}^{*,n+1}_{\alpha}$.
    \State \textbf{Step 4: Second Semi-Lagrangian Step}
    \State Same as Step 1, but using $\mathcal{F}^{*,n+1}_{\alpha}$ instead of $\mathcal{F}^{n}_{\alpha}$. This yields the uncorrected tensor $\mathcal{F}^{n+1,NC}_{\alpha}$. 
    \State Repeat {\bf Step 2} to obtain updated potential and use it to compute the uncorrected macroscopic invariants $\mathcal{M}[f]$, $\mathcal{J}_1[f], \dots, \mathcal{J}_d[f]$, and $\mathcal{E}[f]$ by integrating $\mathcal{F}^{n+1,NC}_{\alpha}$ via the trapezoid rule. 
    
    \State \textbf{Step 5: Global Conservation Correction}
    \State Extract the principal rank-one mode $\mathcal{F}_1$ from the uncorrected tensor $\mathcal{F}^{n+1,NC}_{\alpha}$.
    \State Compute the moments of $\mathcal{F}_1$ to construct the matrix $A$  and the residual vector $\bm{R}$ based on the target invariants $(\mathcal{M}^\star, \mathcal{J}^\star, \mathcal{E}^\star)$.
    \State Construct the velocity correction polynomial $q_c(\bm{v}) = c_0 + \sum_{i=1}^d c_i v_i + c_E |\bm{v}|^2$ using the solution $c$ of the linear system \eqref{eqn:conservation_linear}.
    \State Update the distribution function: $\mathcal{F}^{n+1}_{\alpha} = \mathcal{F}^{n+1,NC}_{\alpha} + \mathcal{F}_1 q_c(\bm{v})$.
    \State $t \gets t + \Delta t$
\EndWhile
\end{algorithmic}
\end{algorithm}

\section{Numerical examples}\label{sec:numerical}
The numerical experiments are designed to assess three aspects of the proposed solver: its ability to capture high-dimensional quantum phase-space structures, its structural preservation of Hermitian symmetry and global invariants, and its empirical computational scaling. We consider two benchmark problems, two-stream instability (TSI) and strong Landau damping (SLD), in both 2D2V and 3D3V phase spaces. In all examples, the electric field is computed using the FFT-based Poisson solver in Section~\ref{subsec:SSmethod}. The HTD truncation tolerance is set to $0.1\varepsilon_{\mathrm{Base}}$, where $\varepsilon_{\mathrm{Base}}$ denotes the prescribed relative tolerance for HTACA in the semi-Lagrangian and Wigner steps; see Equation~\eqref{alg:HTACA} and \cite{zheng2025semihigh}.

Unless otherwise stated, no maximum-rank $r_{\max}$ or minimum-rank $r_{\min}$ limits are imposed in our simulations. The hierarchical Tucker tree used for 2D2V simulations is shown in Figure~\ref{fig:balanced_tree_4d}, with leaf nodes ordered as
\[
(\{1\},\{2\},\{3\},\{4\})=(x,v_x,y,v_y).
\]
For 3D3V simulations, we use the tree in Figure~\ref{fig:unbalanced_tree_6d}, with leaf nodes ordered as
\[
(\{1\},\{2\},\{3\},\{4\},\{5\},\{6\})=(x,v_x,y,v_y,z,v_z).
\]
The time step is chosen according to
\begin{equation}\label{eq:time_step}
\Delta t = \mathrm{CFL}\bigg/\left(\sum_{\mu=1}^{d_x} \frac{v_{\max}}{\Delta x^{(\mu)}}\right).
\end{equation}
All numerical simulations were implemented in MATLAB R2023b. The numerical experiments test three claims: (i) the solver captures high-dimensional WP phase-space structures; (ii) the symmetry and global invariants are preserved after adaptive- rank compression; and (iii) the observed cost scales nearly linearly with $N$ when the hierarchical ranks remain moderate. For each benchmark, we specify all parameters needed to reproduce the run in Table \ref{tab:reproducibility-setup}.

\begin{table}[htbp]
\centering
\scriptsize
\setlength{\tabcolsep}{3.2pt}
\renewcommand{\arraystretch}{1.12}
\caption{
Reproducibility setup for the TSI and SLD experiments. Entries before/after
the slash indicate main-diagnostic/scaling settings when they differ. All runs
use periodic domains, trapezoidal velocity quadrature, the tree structures in
Fig.~\ref{fig:dimension_trees}, HTD truncation tolerance
$0.1\varepsilon_{\mathrm{Base}}$, and the adaptive-weight global conservation
correction unless otherwise stated.
}
\label{tab:reproducibility-setup}
\resizebox{\textwidth}{!}{%
\begin{tabular}{l c c c c}
\hline
Parameter
& 2D2V TSI
& 3D3V TSI
& 2D2V SLD
& 3D3V SLD \\
\hline
Spatial domain $\Omega_x$
& $[0,4\pi]^2$
& $[0,4\pi]^3$
& $[0,4\pi]^2$
& $[0,4\pi]^3$ \\
Velocity domain $\Omega_v$
& $[-2\pi,2\pi]^2$
& $[-2\pi,2\pi]^3$
& $[-2\pi,2\pi]^2$
& $[-2\pi,2\pi]^3$ \\
Initial condition
& \eqref{eqn:ini_TSI}
& \eqref{eqn:ini_TSI}
& \eqref{eqn:ini_SLD}
& \eqref{eqn:ini_SLD} \\
$H$
& $\{1,8\}$
& $\{1,8\}$
& $\{1,8\}$
& $\{1,8\}$ \\
$\alpha$
& --
& --
& $0.5$
& $1/3$ \\
Grid, diagnostic/scaling
& $512^4/(2^6,\ldots,2^{11})$
& $256^6/(2^6,\ldots,2^{11})$
& $512^4/(2^6,\ldots,2^{11})$
& $256^6/(2^6,\ldots,2^{11})$ \\
CFL, diagnostic/scaling
& $50/$--
& $5/$--
& $50/$--
& $5/$-- \\
$\Delta t$, diagnostic/scaling
& \eqref{eq:time_step}/$0.1$
& \eqref{eq:time_step}/$0.1$
& \eqref{eq:time_step}/$0.1$
& \eqref{eq:time_step}/$0.1$ \\
Final time $T$, diagnostic/scaling
& $30/5$
& $30/5$
& $30/5$
& $30/5$ \\
$\varepsilon_{\mathrm{Base}}$
& $10^{-2}$
& $10^{-2}$
& $10^{-3}$
& $10^{-4} / 10^{-3}$ \\
Rank bounds
& none
& none
& none
& none \\
Output/snapshot time
& $T=30$ / $T=5$
& $T=30$ / $T=5$
& $T=30$ / $T=5$
& $T=30$ / $T=5$ \\
\hline
\end{tabular}%
}
\end{table}

\subsection{Two-stream instability (TSI)}\label{subsec:TSI}
The initial condition of the TSI problem is given by:
\begin{equation}\label{eqn:ini_TSI}
    f_0(\bm{x},\bm{v}) = \prod_{i=1}^{d} \frac{v_i^2 e^{-v_i^2/2}}{2(\sqrt{2\pi})^{d}} \left(2 + \sum_{i=1}^{d} \cos\left(\frac{x_i}{2}\right)\right).
\end{equation}
Based on the parameter setup in Table~\ref{tab:reproducibility-setup}, we perform both 2D2V and 3D3V tests. The smaller CFL number used in the 3D3V runs helps mitigate rapid rank growth during long-time simulations. Although the adaptive-rank solver substantially reduces the computational cost, tighter truncation tolerances increase both the runtime and the hierarchical ranks. The parameters used here are chosen to demonstrate long-time high-dimensional simulations while resolving the main phase-space structures.

Figure~\ref{fig:main_phase_sld_tsi} summarizes representative TSI and SLD diagnostics retained in the main text. Panels~(a) and~(b) show 3D3V TSI phase-space slices at $T=30$. For $H=1$, the slice is
$(x,v_x,y,v_y,z,v_z)=(0,0,y,v_y,0,-3.02)$; for $H=8$, the slice is
$(x,v_x,y,v_y,z,v_z)=(0,0,y,v_y,0,0)$. These snapshots are chosen to illustrate the non-classical Wigner structures, including sign-changing phase-space regions and quantum-dispersive features. The larger value of $H$ produces more pronounced quantum effects in the observed slice.

\subsection{Strong Landau damping (SLD)}\label{subsec:SLD}
The SLD problem is initialized with
\begin{equation}\label{eqn:ini_SLD}
f_0(\bm{x},\bm{v})
    = \frac{1}{(2\pi)^{d/2}}
      \left(1+\alpha\sum_{\mu=1}^{d}\cos(k x^{(\mu)})\right)
      \exp\!\left(-\frac{|\bm{v}|^2}{2}\right).
\end{equation}
Simulations are based on the parameter setup in Table~\ref{tab:reproducibility-setup}. Panel~(c) of Figure~\ref{fig:main_phase_sld_tsi} shows a 3D3V SLD phase-space slice at $T=30$ with
$(x,v_x,y,v_y,z,v_z)=(0,0,y,v_y,0,0)$ and $H=1$, demonstrating that the adaptive-rank solver also resolves the high-dimensional SLD phase-space structure. Panel~(d) of Figure~\ref{fig:main_phase_sld_tsi} shows the SLD electrostatic-energy history with $H=8$ for both 2D2V and 3D3V cases. The energy initially damps and then approaches a plateau; for this larger value of $H$, the behavior is close to weak Landau damping, reflecting stronger quantum effects. Further recurrence behavior may require longer simulations with tighter tolerances and higher resolution, in which the electrostatic energy can undergo repeated rebounds before flattening. This will be left for future work with parallel C++ simulations.

\begin{figure}[t]
    \centering
    \begin{subfigure}{0.49\textwidth}
    \includegraphics[width=\linewidth]{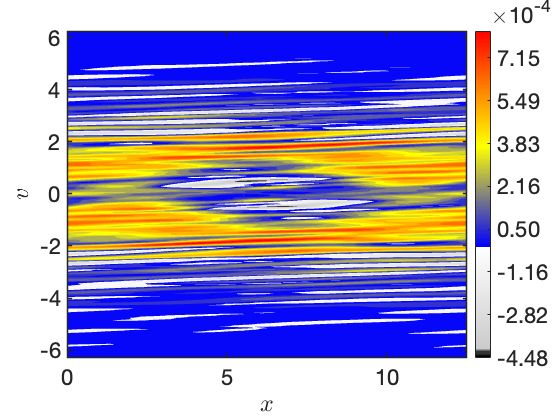}
    \caption{3D3V TSI, $H=1$, $(0,0,y,v_y,0,-3.02)$.}
    \label{fig:3D3V_phase_tsi_H1}
    \end{subfigure}
    \begin{subfigure}{0.49\textwidth}
    \includegraphics[width=\linewidth]{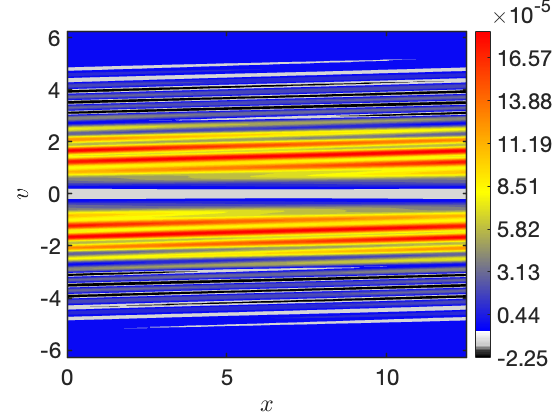}
    \caption{3D3V TSI, $H=8$, $(0,0,y,v_y,0,0)$.}
    \label{fig:3D3V_phase_tsi_H8}
    \end{subfigure}
    
    \begin{subfigure}{0.49\textwidth}
    \includegraphics[width=\linewidth]{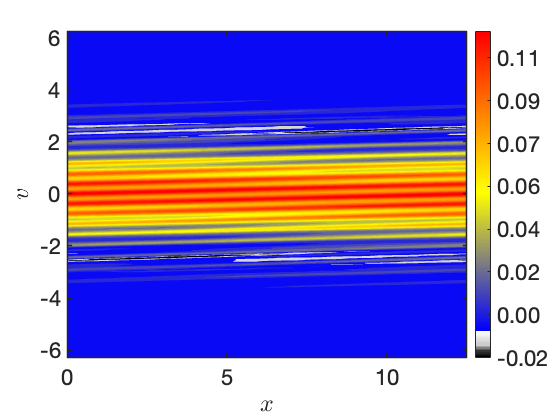}
    \caption{3D3V SLD, $H=1$, $(0,0,y,v_y,0,0)$.}
    \label{fig:3D3V_phase_sld_H1}
    \end{subfigure}
    \begin{subfigure}{0.49\textwidth}
    \includegraphics[width=\linewidth]{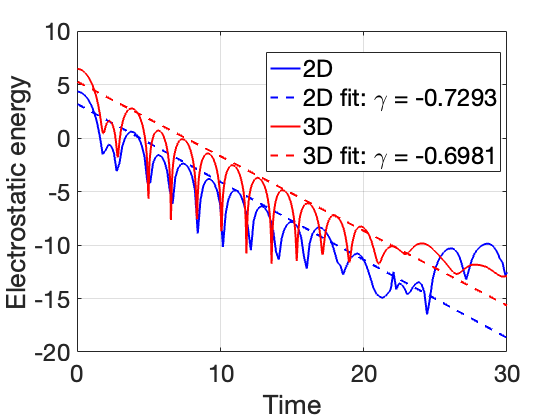}
    \caption{SLD electrostatic energy, $H=8$.}
    \label{fig:rate_sld_H8}
    \end{subfigure}
    \caption{Representative TSI and SLD diagnostics retained in the main text. Panels~(a)--(b) show 3D3V TSI phase-space slices at $T=30$ for $H=1$ and $H=8$, with slice locations $(x,v_x,y,v_y,z,v_z)=(0,0,y,v_y,0,-3.02)$ and $(0,0,y,v_y,0,0)$, respectively. Panel~(c) shows the 3D3V SLD slice at $(x,v_x,y,v_y,z,v_z)=(0,0,y,v_y,0,0)$ for $H=1$ and $T=30$. Panel~(d) shows the 2D2V and 3D3V SLD electrostatic-energy histories for $H=8$. The retained panels illustrate non-classical Wigner phase-space structures, the stronger quantum-dispersive behavior observed for larger $H$, and the SLD damping-to-plateau behavior.}
    \label{fig:main_phase_sld_tsi}
\end{figure}

\subsection{Conservation and Hermitian-symmetry preservation}
We next verify the structure-preserving properties of the proposed solver.
Table~\ref{tab:properties} summarizes the conservation and real-valuedness
diagnostics for the 2D2V and 3D3V TSI and SLD tests introduced in
Sections~\ref{subsec:TSI} and~\ref{subsec:SLD}. Instead of plotting the full
time histories, we report the maximum absolute deviation over $0\le t\le 30$.
The mass, momentum-magnitude, and total-energy defects remain near roundoff
level in all cases. The 2D2V defects are typically $10^{-13}$--$10^{-15}$,
while the 3D3V defects are about $10^{-12}$--$10^{-13}$, consistent with the
larger number of tensor operations and accumulated floating-point roundoff in
HTD contractions.

The Hermitian-symmetry diagnostic is measured by the integrated imaginary part
after transforming back to velocity space. This quantity also remains at
roundoff level, indicating that the symmetry-aware sampling and mapping
strategy prevents artificial imaginary components from accumulating after
adaptive-rank compression.

\begin{table}[t]
\centering
\caption{Maximum over time of the discrete global conservation defects and
real-valuedness diagnostic for the TSI and SLD tests over $0\le t\le 30$.
The reported quantities are $\max_t |\mathcal{M}(t)-\mathcal{M}(0)|$ for mass,
$\max_t \big||\mathcal{J}(t)|-|\mathcal{J}(0)|\big|$ for momentum magnitude,
$\max_t |\mathcal{E}(t)-\mathcal{E}(0)|$ for total energy, and
$\max_t |\text{Imaginary}(t)-\text{Imaginary}(0)|$ for the
integrated imaginary-part diagnostic. All quantities remain near roundoff level
after adaptive-rank compression, the Fourier-Hermitian-symmetry-aware update,
and the global moment correction.}
\label{tab:properties}
\begin{tabular}{lllcccc}
\hline
Dim. & Test & $H$ & $\mathcal{M}$ & $|\mathcal{J}|$ & $\mathcal{E}$ & Imaginary \\
\hline
2D2V & TSI & $1$ & $1.705{\times}10^{-13}$ & $1.231{\times}10^{-15}$ & $4.547{\times}10^{-13}$ & $6.654{\times}10^{-13}$ \\
2D2V & TSI & $8$ & $5.684{\times}10^{-14}$ & $1.394{\times}10^{-15}$ & $2.274{\times}10^{-13}$ & $1.344{\times}10^{-13}$ \\
2D2V & SLD & $1$ & $1.705{\times}10^{-13}$ & $1.023{\times}10^{-16}$ & $1.705{\times}10^{-13}$ & $5.373{\times}10^{-13}$ \\
2D2V & SLD & $8$ & $1.421{\times}10^{-13}$ & $1.453{\times}10^{-16}$ & $1.421{\times}10^{-13}$ & $1.421{\times}10^{-13}$ \\
\hline
3D3V & TSI & $1$ & $2.501{\times}10^{-12}$ & $1.435{\times}10^{-13}$ & $7.276{\times}10^{-12}$ & $2.860{\times}10^{-13}$ \\
3D3V & TSI & $8$ & $1.137{\times}10^{-12}$ & $3.793{\times}10^{-14}$ & $7.276{\times}10^{-12}$ & $3.199{\times}10^{-13}$ \\
3D3V & SLD & $1$ & $2.046{\times}10^{-12}$ & $1.354{\times}10^{-14}$ & $3.183{\times}10^{-12}$ & $2.787{\times}10^{-13}$ \\
3D3V & SLD & $8$ & $1.592{\times}10^{-12}$ & $3.129{\times}10^{-15}$ & $2.728{\times}10^{-12}$ & $2.913{\times}10^{-13}$ \\
\hline
\end{tabular}
\end{table}

\subsection{Empirical computational scaling}
Finally, we examine the empirical computational scaling of the adaptive-rank WP
solver. The tests use a fixed time step $\Delta t=0.1$, a final time $T=5$,
and the same number of grid points $N$ in each phase-space coordinate.
Figure~\ref{fig:time_fig} shows that the total CPU time grows approximately
linearly with $N$ for both 2D2V and 3D3V simulations for the prescribed
values of $\varepsilon_{\mathrm{Base}}$. In these tests,
$N=64\times 2^i$, $i=0,\ldots,5$, with
$\varepsilon_{\mathrm{Base}}=10^{-2}$ for TSI and $10^{-3}$ for SLD. This
scaling is consistent with the HTACA sampling complexity when the hierarchical
ranks remain moderate.

\begin{figure}
    \centering
    \begin{subfigure}{0.49\textwidth}
    \includegraphics[width=\linewidth]{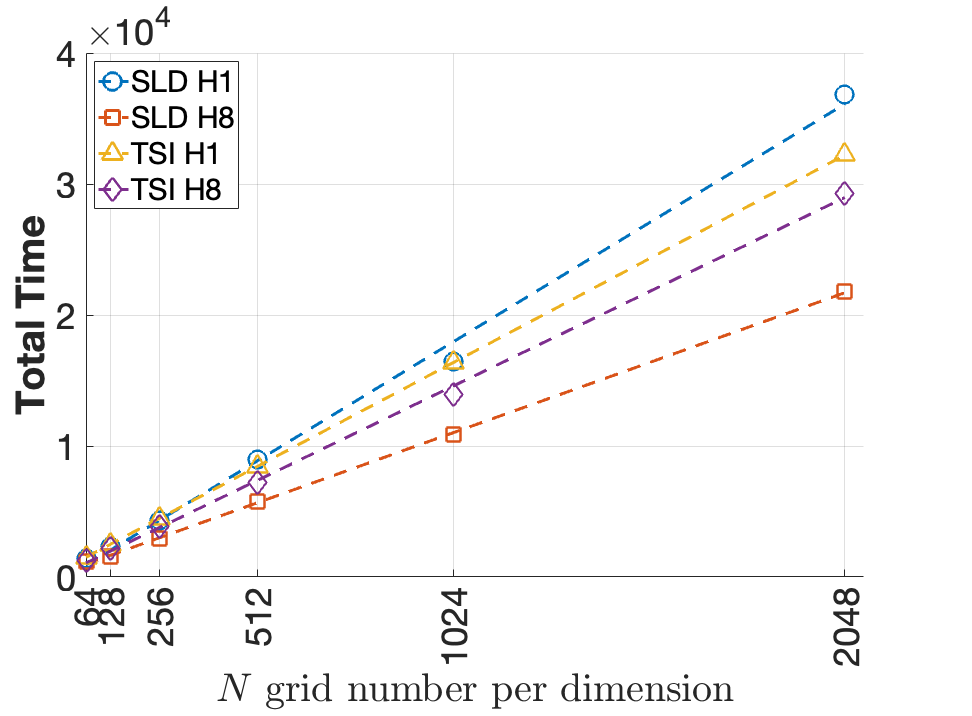}
    \caption{2D2V running time.}
    \label{fig:2D2V_time_fig}
    \end{subfigure}
    \begin{subfigure}{0.49\textwidth}
    \includegraphics[width=\linewidth]{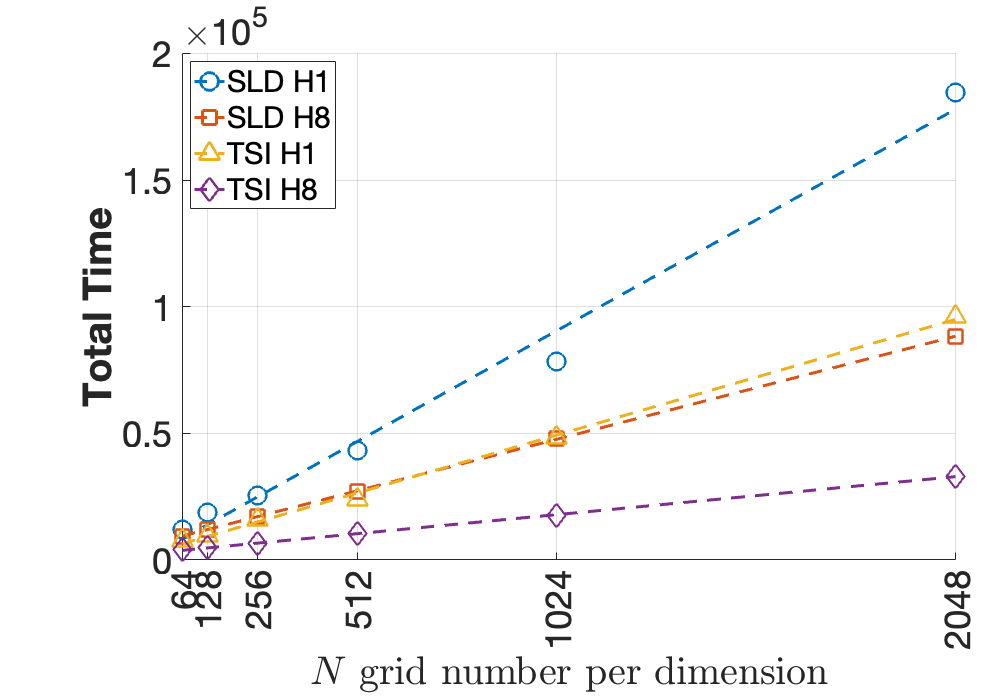}
    \caption{3D3V running time.}
    \label{fig:3D3V_time_fig}
    \end{subfigure}
    \caption{Empirical wall-clock scaling of the adaptive-rank WP solver for fixed $\Delta t=0.1$ and final time $T=5$. The total CPU running time grows approximately linearly with the number of grid points per coordinate $N$ for both 2D2V and 3D3V tests. 
    }
    \label{fig:time_fig}
\end{figure}

\section{Conclusions}\label{sec:conclusion}
We developed a structure-preserving adaptive-rank solver for high-dimensional Wigner--Poisson simulations. By exploiting low-rank structure, the proposed method reduces the computational and communication challenges caused by the nonlocal Wigner potential and mitigates the curse of dimensionality without requiring full phase-space tensor assembly. The solver combines HTACA-based sampling with two Wigner--Poisson-specific structure-preservation mechanisms: a Fourier-Hermitian-symmetry-aware velocity-Fourier update that keeps the reconstructed distribution real-valued, and a global moment correction that enforces discrete mass, momentum, and self-consistent total energy after adaptive compression. In the reported TSI and SLD tests, the method captures 3D3V phase-space dynamics, including non-classical Wigner structures such as wave-packet spreading and tunneling-related sign-changing regions, while maintaining conservation and symmetry diagnostics at roundoff level.
The timing results show approximately linear observed runtime growth over the tested problem sizes for the ranks induced by the prescribed truncation tolerances. Thus, the reported scaling should be interpreted as empirical scaling in the tested tolerance-controlled moderate-rank regimes, rather than as a fixed-rank complexity result.

The current implementation is a proof-of-concept MATLAB implementation, and the demonstrated tolerances were chosen to make long-time 3D3V runs feasible. A C++ implementation, tighter truncation tolerances, and higher resolutions are needed for more detailed studies of recurrence, fine-scale interference, and long-time quantum effects. The same structure-preserving framework also provides a natural starting point for collisional Wigner--Poisson--BGK extensions and stopping-power simulations of $\alpha$-particles.

\section*{Acknowledgements}
The authors acknowledge support from AFOSR grants FA9550-24-1-0254 and DOE grant DE-SC0023164. AJC is also supported by DOE/NNSA grant DE-NA0004265 and ONR grant N00014-24-1-2242. The authors acknowledge assistance from ChatGPT with grammar editing.

\appendix
\section{Implementation of conservative WENO quadrature}\label{apdix:WENO}
Along each axis, we find $p^{(\mu)} \in \{0,\ldots,N_x^{(\mu)}\}$, such that $x_{\star}^{(\mu)} \in [x_{p^{(\mu)}}^{(\mu)} - \frac{1}{2} \Delta x^{(\mu)},\, x_{p^{(\mu)}}^{(\mu)} + \frac{1}{2} \Delta x^{(\mu)})$ and $\xi^{(\mu)} = (x_{\star}^{(\mu)} - x_{p^{(\mu)}}^{(\mu)})/\Delta x^{(\mu)}) \in [-\frac{1}{2},\frac{1}{2})$. We present only the case for $\xi^{(\mu)} \in [-\frac{1}{2},0]$, as the remaining interval can be handled by a symmetric flip. We omit the notation for the velocity space in this part. Let $Q$ contain all the function values on the required stencils $\{\bm{x}_{\bm{p}+\bm{q}}\}$, where $\bm{q}$ satisfies for each index, $q^{(\mu)} \in \{-3, -2,\cdots,2\}$, we will apply $d$ successive 1D-WENO5 reconstruction to get the approximation: 
\begin{align}\label{eq:SL_update}
    f(\bm{x}_\star, t^n) \approx W_{1D} (\cdots W_{1D}(W_{1D}(Q,1),2),\cdots),d).
\end{align}
Here, $1,\cdots,d$ refer to the corresponding axes along which we approximate, and the 1D-WENO5 scheme, with the dimension index omitted, acting on a $1 \times 6$ vector $g = (g_{-3}, g_{-2},\cdots,g_{2})$ is given by:
\begin{align}
    W_{1D}(g) = g_0 - \xi \left(\hat{g}_{1/2}(\xi) - \hat{g}_{-1/2}(\xi) \right)
\end{align}
where $\hat{g}_{1/2}(\xi)$ and $\hat{g}_{-1/2}(\xi)$ are expressed as:
\begin{equation*}
    \hat{g}_{-1/2}(\xi) = 
    \begin{cases}
        (g_{-3},g_{-2},g_{-1},g_{0},g_{1}) \, \tilde{C}^L_5 \, (1, |\xi|, \xi^2, \xi^3, \xi^4)^{\mathrm{T}}, \quad \xi \in [-\frac{1}{2},0]\\
        (g_{-2},g_{-1},g_{0},g_{1},g_{2}) \, \tilde{C}^L_5 \,(1, |\xi|, \xi^2, \xi^3, \xi^4)^{\mathrm{T}}, \quad \xi \in (0,\frac{1}{2}).
    \end{cases}
\end{equation*}
 Here, the linear coefficient matrix $\tilde{C}^L_5$ is given by
\begin{equation}\label{eq:SL_linearmatrix}
    \tilde{C}^L_5 = \begin{bmatrix} \frac{1}{3} \omega_1 & 0 & -\frac{1}{24} & 0 & \frac{1}{120} \\ -\frac{7}{6} \omega_1 - \frac{1}{6} \omega_2 & -\frac{1}{24} & \frac{1}{4} & \frac{1}{24} & -\frac{1}{30} \\ \frac{11}{6} \omega_1 + \frac{5}{6} \omega_2 + \frac{1}{3} \omega_3 & \frac{5}{8} & -\frac{1}{3} & -\frac{1}{8} & \frac{1}{20} \\ \frac{1}{3} \omega_2 + \frac{5}{6} \omega_3 & -\frac{5}{8} & \frac{1}{12} & \frac{1}{8} & -\frac{1}{30} \\ -\frac{1}{6} \omega_3 & \frac{1}{24} & \frac{1}{24} & -\frac{1}{24} & \frac{1}{120} \end{bmatrix}, 
\end{equation}
where $$\omega_i = \frac{ \frac{d_i}{(\beta_i + \epsilon)^2} }{ \sum_{k=1}^{3} \frac{d_k}{(\beta_k + \epsilon)^2} },$$ with $\epsilon = 1\times 10^{-6}$ and $d_1 = \frac{1}{10}, \, d_2 = \frac{3}{5}, \, d_3 = \frac{3}{10}$. The smoothness indicators $\beta_i$ (for $i = 1, 2, 3$) are computed by
\begin{equation*}
    \beta_i = \left( \left(\begin{bmatrix}
1 & -4 & 3 & 0 & 0 \\
0 & 1 & 0 & -1 & 0 \\
0 & 0 & 3 & -4 & 1
\end{bmatrix} g^T \right)_i \right)^2 + \frac{13}{3} \left( \left(\begin{bmatrix}
1 & -2 & 1 & 0 & 0 \\
0 & 1 & -2 & 1 & 0 \\
0 & 0 & 1 & -2 & 1
\end{bmatrix}g^T \right)_i \right)^2.
\end{equation*}
See \cite{qiu2010conservative,christlieb2025sampling} for details on why this is defined this way.

\section{Self-consistent field energy}\label{apdix:energy}
A fully self-consistent treatment of the electric energy can also be considered. Since $f_1$ is rank one, we write $f_1(\bm{x},\bm{v})=h(\bm{x})V_1(\bm{v})$. The density perturbation induced by the correction is therefore $\delta\rho(\bm{x})=h(\bm{x})\eta(c)$, where
$
\eta(c)
=
\left\langle V_1(\bm{v})q_c(\bm{v})\right\rangle_{\Omega_{\bm v}}
=
\eta_0c_0+\sum_{i=1}^d\eta_i c_i+\eta_Ec_E.
$
Let $\chi(\bm{x})$ be the zero-mean periodic solution of
\[
-\Delta \chi(\bm{x})
=
h(\bm{x})
-
\frac{1}{|\Omega_x|}
\int_{\Omega_x}h(\bm{x})\,d\bm{x}.
\]
By linearity of the Poisson equation, the induced potential perturbation is $\delta\Phi(\bm{x})=\eta(c)\chi(\bm{x})$. Hence the corresponding change in electric field energy is
\[
\delta\mathcal{U}(c)
=
\eta(c)\Gamma_1
+
\frac{1}{2}\eta(c)^2\Gamma_2,
\qquad
\Gamma_1
:=
\int_{\Omega_{\bm{x}}}
\nabla\Phi\cdot\nabla\chi\,d\bm{x},
\quad
\Gamma_2
:=
\int_{\Omega_{\bm{x}}}
|\nabla\chi|^2\,d\bm{x}.
\]
Thus, for a general adaptive spatial weight $h(\bm{x})$, the self-consistent field-energy contribution can be included, but the total-energy constraint becomes nonlinear due to the quadratic dependence on $\eta(c)$.

In light of the LoMaC philosophy and computational efficiency, we instead take the spatially uniform choice $h(\bm{x})=1$. This imposes the conservation constraints directly on the global moments without an adaptive spatial weight. In this case, the right-hand side of the Poisson response equation vanishes after mean subtraction, so $\chi(\bm{x})=0$. Consequently, $\delta\Phi(\bm{x})=0$ and $\delta\mathcal{U}(c)=0$. Therefore, the correction does not change the electric potential or the electric field, and the enforcement of mass, momentum, and total energy reduces to solving the linear system $A c=\bm{R}$. This preserves the simplicity and efficiency of the LoMaC-type correction while still evaluating the total-energy defect $\Delta\mathcal{E}$ using the self-consistent potential obtained from the current density after the semi-Lagrangian update.

\bibliographystyle{siam} 
\bibliography{ref}
\end{document}